\newcommand{\K}{\mathbb{K}}
\newcommand{\N}{\mathbb{N}}
\newcommand{\R}{\mathbb{R}}
\newcommand{\C}{\mathbb{C}}
\newcommand{\X}{\underline{X}}
\renewcommand{\leq}{\leqslant}
\newtheorem{theorem}{Theorem}
\newtheorem{lemma}[theorem]{Lemma}
\newtheorem{remark}[theorem]{Remark}
\newtheorem{proposition}[theorem]{Proposition}
\newtheorem{definition}[theorem]{Definition}
\newtheorem{example}[theorem]{Example}
\DeclareMathOperator\id{id}
\DeclareMathOperator\im{im}
\DeclareMathOperator{\Tr}{Tr}
\DeclarePairedDelimiter\floor{\lfloor}{\rfloor}
\DeclareFontFamily{U}{mathx}{\hyphenchar\font45}
\DeclareFontShape{U}{mathx}{m}{n}{
      <5> <6> <7> <8> <9> <10>
      <10.95> <12> <14.4> <17.28> <20.74> <24.88>
      mathx10
      }{}
\DeclareSymbolFont{mathx}{U}{mathx}{m}{n}
\DeclareMathAccent{\widecheck}{0}{mathx}{"71}
\begin{document}

\title
{
Constructively describing orbit spaces of finite groups by few inequalities
}
\author{Philippe Moustrou}
\address{Institut de Mathématiques de Toulouse, UMR 5219, UT2J, 31058 Toulouse, France}
\email{philippe.moustrou@math.univ-toulouse.fr}
\author{Cordian Riener}
\address{Department of Mathematics and Statistics, UiT - the Arctic University of Norway, 9037 Troms\o, Norway}
\email{cordian.riener@uit.no}
\author{Robin Schabert }
\address{Department of Mathematics and Statistics, UiT - the Arctic University of Norway, 9037 Troms\o, Norway}
\email{robin.schabert@uit.no}

\begin{abstract}
Let $G$ be a finite group acting linearly on $\R^n$. A celebrated Theorem of Procesi and Schwarz gives an explicit description of the orbit space $\R^n  /\!/G$ as a basic closed semi-algebraic set. We give a new proof of this statement and another description as a basic closed semi-algebraic set using elementary tools from real algebraic geometry. Bröcker was able to  show that the number of inequalities needed to describe the orbit space generically depends only on the group $G$. Here, we construct such inequalities explicitly for abelian groups and in the case where only one inequality is needed. Furthermore, we answer an open question raised by Bröcker concerning the genericity of his result.
\end{abstract}

\maketitle

\section{Introduction}

A set $S \subset \K^n$ defined as the intersection of finitely many polynomial inequalities is called a \emph{basic semi-algebraic set}. Sets obtained as finite unions or complements of such basic sets are known as \emph{semi-algebraic} sets. A fundamental statement in real algebraic geometry, attributed to Tarski and Seidenberg, asserts that this class of sets is closed under polynomial maps. However, obtaining an explicit description of the image for a given semi-algebraic set and a specific polynomial map is far from trivial. Furthermore, although the image of a semi-algebraic set under a polynomial map is also semi-algebraic, it is generally not true that the image of a basic semi-algebraic set remains basic.

In this article, we investigate a special class of polynomial maps which map basic semi-algebraic sets to basic ones. Let $\K$ denote either the real numbers $\R$ or the complex numbers $\C$. Let $G$ be a group which we fix for this article to be a finite group and assume that  $G$ acts linearly on $\K^n$. Hilbert observed that, in this case, the ring of invariant polynomials is a finitely generated $\K$-algebra, say
\[
\K[X_1, \dots, X_n]^G = \K[\pi_1, \dots, \pi_m] \subset \K[\X] := \K[X_1, \dots, X_n].
\]
This inclusion induces the \emph{Hilbert map} $\Pi: \K^n \to V_{\K}(I_{\Pi})$, where $V_{\K}(I_{\Pi})$ is the variety in $\K^m$ defined by the algebraic relations (syzygies) of the generators. 

In the algebraically closed case, i.e., when $\K = \C$, this map is surjective and affords a homeomorphism between the orbit space $\C^n / G$ and the variety $V_{\C}(I_{\Pi})$. Indeed, the image of the polynomial map corresponds to the categorical quotient 
$\K^n /\!/ G = \operatorname{Spec}(\C[\X]^G)$. However, in general, if $\K$ is not algebraically closed, the map fails to be surjective. In this case, the real categorical quotient $\K^n /\!/ G$ is, by the Tarski-Seidenberg Theorem, a \emph{semi-algebraic set} and can be written as a union of intersections of solution sets of polynomial inequalities.

Remarkably, it turns out that in this case the situation is more favorable: Even though, in general, the image of a polynomial map is not basic and obtaining explicit polynomial descriptions can be challenging, it was shown by Procesi and Schwarz \cite{procesi1985inequalities} that the image of the Hilbert map is a basic closed semi-algebraic set. Moreover, in the case of compact Lie groups, these inequalities can be obtained directly from the chosen fundamental invariants.

For a polynomial $p$, we consider the differential $dp$ defined by
\[
dp = \sum_{j=1}^n \frac{\partial p}{\partial x_j} dx_j.
\]
For finite (compact) $G$, we have a $G$-invariant inner product
$\langle \cdot , \cdot \rangle$, which, when applied to the differentials, yields
\[
\langle dp, dq \rangle = \sum_{j=1}^n \frac{\partial p}{\partial x_j} \cdot \frac{\partial q}{\partial x_j}.
\]
Since differentials of $G$-invariant polynomials are $G$-equivariant, the inner products $\langle d\pi_i, d \pi_j \rangle$ $(i, j \in \{1, \ldots, m\})$ are $G$-invariant, and hence every entry of the symmetric matrix polynomial
\[
M_\Pi = (\langle d\pi_i, d \pi_j \rangle)_{1 \le i, j \le m}
\] 
is an invariant polynomial. With a slight misuse of notation, we can thus represent it as a matrix polynomial in $\pi_1,\ldots,\pi_m$. Using this construction, Procesi and Schwarz \cite{procesi1985inequalities} have shown the following.

\begin{theorem}[Procesi and Schwarz]\label{procesischwarz}
Let $G \subseteq \mathrm{GL}_n(\K)$ be a finite group, and let
$\Pi = (\pi_1, \ldots, \pi_m)$ be fundamental invariants of $G$.
Then the orbit space is given by polynomial inequalities,
\[
\Pi(\K^n) = \left\{z \in V(I_\Pi) \subseteq \K^m ~\middle|~ M_\Pi(z) \text{ is positive semi-definite}\right\},
\] 
where $I_\Pi \subseteq \K[z_1, \ldots, z_m]$ is the ideal of relations of
$\pi_1, \ldots, \pi_m$.
\end{theorem}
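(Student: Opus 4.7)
My plan is to establish the two inclusions separately, with the forward direction being essentially tautological and the converse constituting the substantial content of the theorem.

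\emph{Forward inclusion.} For any $x\in\K^n$, let $J(x)=\bigl(\partial \pi_i/\partial X_k\bigr)_{i,k}(x)$ denote the $m\times n$ Jacobian matrix of $\Pi$. Directly from the definition of $M_\Pi$ I obtain the identity
\[
M_\Pi(\Pi(x)) \;=\; J(x)\,J(x)^T,
\]
since the $(i,j)$ entry of the right-hand side is exactly $\sum_k (\partial \pi_i/\partial X_k)(\partial \pi_j/\partial X_k) = \langle d\pi_i(x),d\pi_j(x)\rangle$. When $x\in\R^n$, this is a real Gram matrix and therefore positive semi-definite, giving $\Pi(\R^n)\subseteq\{z\in V(I_\Pi):M_\Pi(z)\succeq 0\}$.

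\emph{Reverse inclusion.} I fix $z\in V(I_\Pi)\cap\R^m$ with $M_\Pi(z)\succeq 0$. Since the complex Hilbert map is bijective from $\C^n/G$ onto $V_\C(I_\Pi)$ (as recalled in the introduction), there exists $w\in\C^n$ with $\Pi(w)=z$, unique up to the $G$-action. Because $z$ is real and $\Pi$ is defined over $\R$, $\Pi(\overline{w})=\overline{z}=z$, which forces $\overline{w}\in G\cdot w$; equivalently, complex conjugation permutes the finite orbit $G\cdot w$. The goal becomes to show that this permutation must have a fixed point, i.e., that the orbit meets $\R^n$.

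To this end, I assume for contradiction that $G\cdot w\cap\R^n=\emptyset$, pick $x_0=u+iv\in G\cdot w$ minimizing $\|\mathrm{Im}(x)\|^2$ over the orbit (so $v\neq 0$), and decompose $J(x_0)=A+iB$ into real and imaginary parts. Since $z$ is real, so is $M_\Pi(z)=(A+iB)(A+iB)^T$, which forces the imaginary part $AB^T+BA^T$ to vanish and leaves
\[
M_\Pi(z)\;=\;AA^T-BB^T.
\]
I then plan to produce a real vector $\xi\in\R^m$ with $\xi^T M_\Pi(z)\xi<0$, witnessed by a direction in the row span of $B$ that is ``not absorbed'' by $A$; the minimality of $\|v\|$ on the orbit is what rules out the alternative of cancellation by a real orbit element.

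\emph{Expected main obstacle.} The crux is making the final step rigorous: converting ``non-real orbit'' into an honest negative eigenvalue of $M_\Pi(z)$. On the regular locus of $\Pi$, where the rank of $M_\Pi$ equals $n$ and the implicit function theorem applies, the obstruction to lifting a real analytic path in $V(I_\Pi)\cap\R^m$ back to $\R^n$ translates directly into the signature of $M_\Pi$. Singular strata, where the rank of $M_\Pi$ drops, are more delicate; I expect to handle these either by stratifying $V(I_\Pi)$ and inducting on dimension, or by approximating $z$ by regular values in $\Pi(\R^n)$ and invoking closedness of both $\Pi(\R^n)$ (which holds because the finite group $G$ acts properly) and the PSD locus $\{M_\Pi\succeq 0\}$.
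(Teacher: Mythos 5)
Your forward inclusion is correct and matches the standard argument: $M_\Pi(\Pi(x))=J(x)J(x)^T$ is a real Gram matrix when $x\in\R^n$, hence PSD. The setup of the converse is also sound — the bijectivity of the complex Hilbert map, the observation that $\overline w\in G\cdot w$ when $z$ is real, and the decomposition $M_\Pi(z)=AA^T-BB^T$ with $AB^T+BA^T=0$ are all correct. But then the argument stops: you say you ``plan to produce a real vector $\xi$ with $\xi^T M_\Pi(z)\xi<0$, witnessed by a direction in the row span of $B$ not absorbed by $A$,'' and you acknowledge yourself that ``the crux is making the final step rigorous.'' That crux is exactly the theorem, and you have not supplied an argument for it. It is not clear how minimality of $\|\mathrm{Im}(x)\|^2$ over the orbit forces $B$ to dominate $A$ in some direction, and the rank of $M_\Pi$ can and does drop at non-principal orbits, so the implicit-function-theorem heuristic on the regular locus does not obviously close the singular strata. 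Approximating $z$ by regular values inside $\Pi(\R^n)$, as you suggest, presupposes that you can already characterize $\Pi(\R^n)$ near $z$, which is what you are trying to prove. So as written this is a genuine gap, not a detail to be filled in.

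The paper's route sidesteps exactly this difficulty. Reformulating in terms of ring homomorphisms (Proposition~\ref{equivalenceprocesi}), the hypothesis $M_\Pi(z)\succeq 0$ is equivalent to $\phi(\langle df,df\rangle)\geq 0$ for all $f\in\R[\X]^G$. The crucial new ingredient is Proposition~\ref{condition-extension}: if $\phi$ extends to a homomorphism on $\R[\X]^H$ for a subgroup $H$, then the inequality $\geq 0$ for all invariant $f$ persists on $\R[\X]^H$. This is the step that lets one pass from $G$-invariants (where an explicit witness of negativity is hard to exhibit) to $C_\sigma$-invariants for the particular cyclic subgroup generated by the $\sigma$ with $\sigma x=\bar x$. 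There one writes down a \emph{linear} $f$ with $f(x)=i$; then $f^2\in\R[\X]^{C_\sigma}$, $\phi(f^2)=-1$, and $\tilde\phi(\langle df^2,df^2\rangle)=4\,\tilde\phi(f^2)\,\langle df,df\rangle<0$ gives the contradiction directly, with no stratification and no closedness argument. If you wish to salvage the Jacobian approach, the missing idea you need is precisely some analogue of Proposition~\ref{condition-extension}: a way to reduce to a subgroup of order $2$, where the negative direction becomes visible. Without that, the last paragraph of your proposal remains a list of hopes rather than a proof.
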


This theorem has many applications in various areas, including differential geometry, dynamical systems, and mathematical physics (see, for example, \cite{field2007dynamics,dubrovin1998differential,huisman1999real}). Since the description of the semi-algebraic set, in combination with Artin's solution to Hilbert's 17th problem, gives rise to an equivariant Positivstellensatz, it can also be applied in polynomial optimization \cite{riener2013exploiting,Moustrou2023}. The goal of this article is twofold. In the first part of the paper, we aim to demonstrate that this remarkable result can be established with elementary results in real algebraic geometry in the case of finite groups. Specifically, we show in Theorem \ref{finitegroupsbasicclosed} that the fact that the set is basic can be directly obtained by combining sums of squares with basic invariant theoretic results. This follows from the well-known fact that the polynomial ring is a finite module over the invariant ring. Additionally, we provide a short proof of the description by Procesi and Schwarz. Note that after their original paper, Procesi and Schwarz also obtained a rather elementary argument for their statement for finite groups \cite{procesi1988defining}. Furthermore,  using standard arguments in invariant theory, like Luna's slice theorem, the finite case can be transferred to the compact case. Thus, any elementary proof for the finite case is essentially generalizable. However, the motivation for our proof is not only that it is elementary but it also serves as a stepping stone for more efficient descriptions of orbit spaces,  which is the question we focus on in the second part. 

Given any basic semi-algebraic set, it is natural to ask about the minimal number of inequalities needed to describe it. A famous result by Bröcker and Scheiderer \cite{broe,scheiderer} shows that any closed basic semi-algebraic set in $\K^n$ can be described by $n(n-1)/2$ inequalities. On the other hand, the description by Procesi and Schwarz is better, as it yields $m$ inequalities, where $m$ is the number of fundamental invariants, which is the dimension the orbit space is embedded into. However, as observed by Procesi and Schwarz, if the order $|G|$ of $G$ is odd, the Hilbert map is surjective and one does not need any inequality, although their construction still produces $m$ inequalities. This raises the natural question of how the number of inequalities is related to the structure of $G$. Bröcker \cite{brocker1998symmetric} answered this question completely: The number $k$ of inequalities needed to describe the orbit space generically, i.e. up to some lower dimensional set $T$, is exactly the maximal number for which $G$ contains an elementary abelian subgroup of order $2^k$. Although this completely answers the question, Bröcker's proof is not constructive. In the second part of the article, we turn to a first class of non-trivial examples, where we constructively build a description with the least number of inequalities as predicted by Bröcker's theorem. Furthermore, we answer a question raised by Bröcker: We give an example for which the generic description obtained from Bröcker is not a complete description of the orbit space, i.e. an example where one needs to add some lower dimension set $T$.

This article is structured as follows: In the following section, we provide elementary arguments to establish that the orbit space is a basic semi-algebraic set and give a new proof for the description due to Procesi and Schwarz, which is obtained by going through subgroup chains. The third section provides the construction of orbit spaces with the least number of inequalities, and we conclude with some open questions.

\section{Real orbit spaces for finite groups}
We now aim to provide firstly a constructive argument for the remarkable fact that $\Pi(\R^n)$ is a basic semi-algebraic set and then secondly give an elementary proof for Theorem \ref{procesischwarz}. Throughout this section, $G$ is a finite group and we use the notations of Theorem \ref{procesischwarz}.
\subsection{Notations}
Let $G$ act linearly on $\K^n$,  then we get an action on the ring of polynomials defined by \[h^\sigma=h(\sigma^{-1}(x)),\text{ for all } \sigma \in G.\] Furthermore, we will use that the polynomial ring $\K[X]$ is a $\K[X]^G$ module. Notice that $\K[\X]$ is integral over $\K[\X]^G$. Indeed, for any $f\in\K[\X]$, we have a monic  characteristic polynomial \[\chi_f(T):= \prod_{\sigma \in G}(T-f^{\sigma}) \in \K[\X]^G[T].\] Therefore, $\K[\X]$ is a finitely generated $\K[\X]^G$-module. Moreover, we can define a simple projection operator, called the \emph{Reynolds operator} which gives a projection from $\K[X]$ to $\K[X]^G$. It is defined by
\[\mathcal{R}_G:\;h\mapsto \frac{1}{|G|}\sum_{\sigma\in G} h^{\sigma}.\]

To work in the algebraic setting of ring extensions, it is  practical to identify the points in $\K^n$ with ring homomorphisms from $\K[\X]$ to $\K$. In this way we identify $V_\K(I_\Pi)$ with $\hom(\K[\X]^G,\K)$ and 
every $z\in V_\K(I_\Pi)$ is identified with the ring homomorphism $\phi_z$ defined to be the evaluation in $z$. On the other hand, since every ring homomorphism is determined uniquely by the image of $X_1,\ldots,X_n$, we can equivalently identify every $\phi\in\hom(\K[\X]^G,\K)$ with a unique point $z_\phi\in V_\K(I_\Pi)$.
\begin{proposition}
With the above  identification a point $z\in V_\K(I_\Pi)$ is in the image of $\Pi$ if and only if $\phi_z$ can be extended to a ring homomorphism from $\K[\X]$ to $\K$.
\end{proposition}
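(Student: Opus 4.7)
The plan is to establish both implications by directly translating between points of $\K^n$ and ring homomorphisms $\K[\X]\to\K$ via evaluation. The underlying observation is that any ring homomorphism $\tilde\phi:\K[\X]\to\K$ is uniquely of the form $\mathrm{ev}_x$ for the point $x\in\K^n$ with coordinates $x_i=\tilde\phi(X_i)$, since $X_1,\ldots,X_n$ generate $\K[\X]$ as a $\K$-algebra. Given this, both directions of the equivalence reduce to matching evaluations on the generators $\pi_1,\ldots,\pi_m$.

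For the forward direction, I would assume $z=\Pi(x)$ for some $x\in\K^n$. By the very definition of the Hilbert map, $z_i=\pi_i(x)$ for all $i$, and hence the restriction of $\mathrm{ev}_x$ to the subring $\K[\X]^G=\K[\pi_1,\ldots,\pi_m]$ agrees with $\phi_z$ on the generators $\pi_i$ and therefore on all of $\K[\X]^G$. Consequently $\mathrm{ev}_x:\K[\X]\to\K$ is a ring homomorphism extending $\phi_z$, as desired.

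Conversely, suppose $\tilde\phi:\K[\X]\to\K$ is a ring homomorphism with $\tilde\phi|_{\K[\X]^G}=\phi_z$. Set $x_i:=\tilde\phi(X_i)$ and let $x=(x_1,\ldots,x_n)\in\K^n$. Since $\tilde\phi$ and $\mathrm{ev}_x$ agree on the generators $X_1,\ldots,X_n$, they are equal. For each $j\in\{1,\ldots,m\}$ we then have
\[
\pi_j(x)=\mathrm{ev}_x(\pi_j)=\tilde\phi(\pi_j)=\phi_z(\pi_j)=z_j,
\]
which says exactly $\Pi(x)=z$, so $z$ lies in the image of $\Pi$.

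There is no real obstacle to overcome here: the statement is essentially an unpacking of the identifications introduced just before the proposition. The real work comes afterwards, where the abstract condition ``$\phi_z$ extends to $\K[\X]$'' will be recast, via the integrality of $\K[\X]$ over $\K[\X]^G$ and sums of squares arguments, into the concrete polynomial inequalities describing $\Pi(\R^n)$ as a basic closed semi-algebraic set.
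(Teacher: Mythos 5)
Your proof is correct, and it is essentially the same argument the paper intends: the proposition is stated immediately after the identifications are set up and is left without a written proof because it is a direct unpacking of those identifications, which is exactly what you have done. The only point worth making explicit is that the ring homomorphisms in play are understood to be $\K$-algebra homomorphisms (fixing $\K$ pointwise); this is what justifies the step ``since $\tilde\phi$ and $\mathrm{ev}_x$ agree on $X_1,\ldots,X_n$, they are equal,'' and also what makes $\phi_z$ genuinely an evaluation map. For $\K=\R$ this is automatic for unital ring homomorphisms, but for $\K=\C$ one must assume $\C$-linearity to rule out wild field automorphisms.
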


One of the basic notions in real algebraic geometry is the notion of sums of squares and 
our proofs will rely on the set of invariant sums of squares.

\begin{definition}
A polynomial $f\in\R[\X]$ is called a sum of squares if it can be decomposed into the form $f=f_1^2+\ldots +f_\ell^2$ for some polynomials $f_1,\ldots,f_\ell \in\R[\X]$. We will write $\Sigma{ \R[\X]^2}$ for the set of all these polynomials.  Furthermore, we set $\left(\Sigma\R[\X]^2\right)^G=\Sigma{ \R[\X]^2}\cap\R[X]^G$. More generally,  we call a symmetric matrix polynomial $A\in\R[\X]^{k\times k}$ a sums of squares matrix polynomial, if $A=L^tL$ for some $L\in\R[\X]^{k\times \ell}$ and we say that a sums of squares matrix polynomial is $G$ invariant if all of its entries are $G$-invariant polynomials. 

\end{definition}
Notice that the set of invariant sums of squares $\left(\Sigma\R[\X]^2\right)^G$ defines a quadratic module in the ring $\R[\X]^G$, which in general is not finitely generated (see \cite[Example 5.3]{cimprivc2009sums}). However, using the fact that $\R[X]$ is a finitely generated module it can be conveniently represented using sums of squares matrices. Indeed, let $b_1,\dots,b_l \in \R[\X]$ be generators of $\R[\X]$ over $\R[\X]^G$ and define \[B\in (\R[\X]^G)^{l\times l} \text{ by } B_{ij}:=\mathcal{R}_G(b_i b_j),\] then 
 we have the following characterization (see \cite{gatermann2004symmetry,br,Moustrou2023} for details): 

\begin{proposition}\label{prop:matrixfacto}
Let $f\in\R[X]^G$. Then $f\in \Sigma (\R[\X]^G)^2$ if and only if there exists a $G$-invariant sums of invariant squares matrix polynomial $A\in(\R[\X]^G)^{t\times t}$ with a factorization $A=L^tL$ from some $L\in(\R[\X]^G)^{k\times \ell}$ such that
\[
f=\Tr(A\cdot B)
\]
\end{proposition}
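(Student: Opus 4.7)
The plan is to establish both directions of the equivalence by combining the averaging property of the Reynolds operator $\mathcal{R}_G$ with the module decomposition of $\R[\X]$ over $\R[\X]^G$ furnished by the generators $b_1,\ldots,b_l$.

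For the direction $(\Leftarrow)$, I would first rewrite $B$ in a form that makes its role transparent: setting $v = (b_1,\ldots,b_l)^t$, the entrywise identity $B_{ij} = \frac{1}{|G|}\sum_\sigma b_i^\sigma b_j^\sigma$ yields $B = \frac{1}{|G|}\sum_{\sigma \in G} v^\sigma (v^\sigma)^t$. Given $A = L^t L$ with $L \in (\R[\X]^G)^{k\times \ell}$, the cyclic property of the trace then gives
\[
\Tr(AB) \;=\; \Tr(L^t L B) \;=\; \frac{1}{|G|} \sum_{\sigma \in G} (v^\sigma)^t L^t L v^\sigma \;=\; \frac{1}{|G|} \sum_{\sigma \in G} \|L v^\sigma\|^2,
\]
which is manifestly a sum of squares in $\R[\X]$. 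Since $A$ and $B$ both have invariant entries, $\Tr(AB)$ lies in $\R[\X]^G$, so $f$ is an invariant sum of squares.

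For the direction $(\Rightarrow)$, suppose $f = \sum_{k=1}^s f_k^2$ with $f_k \in \R[\X]$. Using the hypothesis that the $b_i$ generate $\R[\X]$ as a module over $\R[\X]^G$, decompose each $f_k = \sum_{i=1}^l c_{ki} b_i$ with coefficients $c_{ki} \in \R[\X]^G$. Since the $c_{ki}$ are invariant, we have $(f_k^\sigma)^2 = \sum_{i,j} c_{ki} c_{kj}\, b_i^\sigma b_j^\sigma$, and averaging yields $\mathcal{R}_G(f_k^2) = \sum_{i,j} c_{ki} c_{kj} B_{ij} = c_k^t B c_k$, where $c_k = (c_{k1},\ldots,c_{kl})^t$. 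Applying $\mathcal{R}_G$ to the identity $f = \sum_k f_k^2$, and using invariance of $f$, gives $f = \sum_k c_k^t B c_k$. Collecting the $c_k$ as columns of a matrix $C \in (\R[\X]^G)^{l\times s}$, this rewrites as
\[
f \;=\; \Tr(C^t B C) \;=\; \Tr\bigl((CC^t)\, B\bigr) \;=\; \Tr(AB),
\]
where $A := CC^t \in (\R[\X]^G)^{l\times l}$ is an invariant symmetric matrix admitting the desired factorization $A = L^t L$ with $L := C^t$.

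The only genuinely nontrivial step is the module decomposition $f_k = \sum_i c_{ki} b_i$ with invariant coefficients, which is exactly the generation hypothesis on the $b_i$; the remainder of the argument is a mechanical calculation with the Reynolds operator, the cyclic property of the trace, and reshaping of sums of squares. The main conceptual content of the statement is that $B$ encodes the $G$-averaged bilinear form on the generating set, so that invariant sums of squares in $\R[\X]$ correspond precisely to invariant sums of squares matrices paired against $B$ via the trace.
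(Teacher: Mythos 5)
Your proof is correct and follows essentially the same route as the paper: decompose the squares in terms of the module generators $b_i$ with invariant coefficients, apply the Reynolds operator to pull out $B$, and repackage the result as a trace identity. The paper only sketches the forward direction in the case of a single Reynolds-averaged square $f=\mathcal{R}_G(g^2)$ and omits the converse; you spell out both the general multi-square case (via the matrix $C$ of coefficient vectors and cyclicity of the trace) and the $(\Leftarrow)$ direction using the decomposition $B=\tfrac{1}{|G|}\sum_\sigma v^\sigma (v^\sigma)^t$, which are exactly the details the paper leaves to the reader.
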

\begin{proof}
We sketch the proof for the convenience of the reader. We only consider the case when $f$ is the sum of the orbit of one square - and the general case follows directly in the same way. Let  $g\in \R[\X]$ with $f=\mathcal{R}_G(g^2)$ and write $g=\sum_{i=1}^l a_ib_i$ for some $a_1,\dots,a_l\in \R[\X]^G$. Then
\[f=\mathcal{R}_G(g^2)=\mathcal{R}_G(\sum_{i,j=1}^l a_ia_jb_ib_j)=\sum_{i,j=1}^l a_ia_j\mathcal{R}_G(b_ib_j)=a^TBa=Tr(aa^tB)\]
where $a:=(a_1,\dots,a_l)^T$.
\end{proof}

\subsection{Orbit spaces as basic semi-algebraic sets}
Based on the previous discussions it is almost directly clear that the semi-algebraic set $\Pi(\R^n)$ is basic. Indeed, it will be a consequence of the following simple observation. 
\begin{proposition}\label{prop:equivSOS}
Let $z\in V(I_\Pi)$ such that $\Pi^{-1}(z)\not\in\R^{n}$, then there exists $f\in\left(\Sigma\R[\X]^2\right)^G$ such that $\phi_z(f)<0$.
\end{proposition}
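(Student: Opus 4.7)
The plan is to read the hypothesis $\Pi^{-1}(z)\not\in\R^n$ as the statement that the ideal
\[
J := (\pi_1(X)-z_1,\ldots,\pi_m(X)-z_m) \subseteq \R[\X]
\]
has empty real zero set: indeed, $V_\R(J) = \Pi^{-1}(z)\cap\R^n$, and any real point in there would, by the preceding proposition, give a real extension of $\phi_z$ and hence a real preimage of $z$. Once this translation is made, the Real Nullstellensatz combined with the Reynolds operator delivers the desired invariant sum of squares almost automatically.

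First I would invoke the Real Nullstellensatz for $J$: since $V_\R(J)=\emptyset$, there exist $s\in\Sigma\R[\X]^2$ and polynomials $g_1,\ldots,g_m\in\R[\X]$ with
\[
-1 \;=\; s \;+\; \sum_{i=1}^m g_i\,(\pi_i-z_i) \qquad \text{in } \R[\X].
\]
Then I would symmetrize by applying the Reynolds operator $\mathcal{R}_G$ to both sides. Since each $\pi_i-z_i$ already lies in $\R[\X]^G$, it pulls out of $\mathcal{R}_G$, yielding
\[
-1 \;=\; \mathcal{R}_G(s) \;+\; \sum_{i=1}^m \mathcal{R}_G(g_i)\,(\pi_i-z_i) \qquad \text{in } \R[\X]^G.
\]

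The decisive observation is that $\mathcal{R}_G(s)$ is still a sum of squares: writing $s=\sum_\ell h_\ell^2$, one has
\[
\mathcal{R}_G(s) \;=\; \frac{1}{|G|}\sum_{\sigma\in G}\sum_\ell \bigl(h_\ell^\sigma\bigr)^2,
\]
which is visibly in $\Sigma\R[\X]^2$ and is $G$-invariant by construction, so it belongs to $\left(\Sigma\R[\X]^2\right)^G$. Setting $f:=\mathcal{R}_G(s)$ and evaluating the preceding identity under $\phi_z$ then kills every term of the sum because $\phi_z(\pi_i-z_i)=z_i-z_i=0$, leaving $\phi_z(f)=-1<0$, which is exactly what the proposition asks for.

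The real content of the argument lies in the very first step: recognizing that the hypothesis is precisely the input needed to apply the Real Nullstellensatz. Everything that follows is routine, the only point worth flagging being the stability of $\Sigma\R[\X]^2$ under the Reynolds average, which boils down to the fact that squares get sent to sums of squares by the $G$-action.
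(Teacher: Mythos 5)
Your proof is correct, but it takes a genuinely different route from the paper's. You translate the hypothesis into emptiness of the real variety of the ideal $J=(\pi_1-z_1,\dots,\pi_m-z_m)$, invoke the Real Nullstellensatz to get $-1 = s + \sum g_i(\pi_i-z_i)$ with $s\in\Sigma\R[\X]^2$, and then symmetrize with the Reynolds operator, exploiting that the generators of $J$ are already $G$-invariant so they factor out of $\mathcal{R}_G$. The paper instead constructs the invariant sum of squares by hand: it fixes one complex preimage $\xi\in\C^n\setminus\R^n$, notes that $h=(X_i-\mathrm{Re}(\xi_i))^2$ has $h(\xi)<0$, forms the elementary symmetric polynomials of the $G$-orbit $\{h^\sigma\}_{\sigma\in G}$ (these lie in $(\Sigma\R[\X]^2)^G$ since they are symmetric sums of products of squares), and then argues via Descartes' rule of signs applied to $\prod_{h'\in\mathcal{O}_G(h)}(T+h'(\xi))$ that one of these coefficients must evaluate negatively at $z$. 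Your argument is shorter and cleaner and avoids the need to pick a preimage, but it relies on the Real Nullstellensatz, which is precisely the kind of heavy machinery the paper is deliberately avoiding in this section (the abstract advertises ``elementary tools from real algebraic geometry''). The paper's version is fully constructive — it exhibits a concrete $f$ — whereas the Real Nullstellensatz only guarantees existence of $s$. Both approaches ultimately rest on the same averaging idea: yours averages the certificate via $\mathcal{R}_G$, the paper's averages the witness polynomial $h$ via elementary symmetric polynomials of its orbit, so in a sense the paper's $e_1(h^{\sigma_1},\dots,h^{\sigma_l})$ is $|G|\cdot\mathcal{R}_G(h)$. One small redundancy in your write-up: a real point of $V_\R(J)$ is by definition already a real preimage of $z$, so the detour through the proposition on extending $\phi_z$ is unnecessary.
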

\begin{proof}
 Set $\xi=\Pi^{-1}(z)\in \C^n\setminus \R^n$, let $1\leq i\leq n$ be such that $\text{Im}(\xi_i)\neq 0$, and consider the polynomial $h:=(X_i-\text{Re}(\xi_i))^2$. Clearl, $h(\xi)<0$. Let $\mathcal{O}_G(h)$ denote the orbit of $h$ under $G$. We now construct the univariate polynomial $p\in\R[\X]^G[T]$ by 
 \[p(T)=\prod_{h'\in\mathcal{O}_G(h)} (T+h').\]
 Each of the coefficients of $p$ is in $(\Sigma\R[\X]^2)^G$.  We evaluate these coefficients in $z$ and observe that the resulting univariate polynomial will have at least one negative coefficient by  Descartes' rule of signs. Thus, for some $1\leq k\leq l=|\mathcal{O}_G(h)|$ we have an elementary symmetric polynomial  $f:=e_k(h^{\sigma_1},\dots,h^{\sigma_l}) \in\left(\Sigma\R[\X]^2\right)^G$ such that $\phi_z(f)=f(\xi)<0$.
\end{proof}

Combining this observation with  Proposition~\ref{prop:matrixfacto}, we immediately get a description of $\Pi(\R^n)$ as basic semi-algebraic set. 

\begin{theorem}\label{finitegroupsbasicclosed}
The semi-algebraic set $\Pi(\R^n)$ is basic closed, it can be represented as
\[\Pi(\K^n) = \left\{z \in V(I_\Pi) \subseteq \K^m ~\middle|~ \phi_z(B) \text{ is positive semi-definite}\right\}.\]
\end{theorem}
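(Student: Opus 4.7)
The plan is to prove the two inclusions separately, with the reverse direction being a straightforward contrapositive using the two propositions just established.

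For the forward inclusion $\Pi(\R^n) \subseteq \{z \in V(I_\Pi) : \phi_z(B)\psd\}$, I would start with $z = \Pi(x)$ for some $x \in \R^n$ and evaluate $\phi_z(B)$ by exploiting that every entry of $B$ is $G$-invariant: namely, $\phi_z(B)_{ij} = B_{ij}(x) = \mathcal{R}_G(b_ib_j)(x) = \frac{1}{|G|}\sum_{\sigma\in G} b_i(\sigma^{-1}x)\,b_j(\sigma^{-1}x)$. This identifies $\phi_z(B)$ as the Gram-type sum
\[
\phi_z(B) \;=\; \frac{1}{|G|}\sum_{\sigma\in G} v_\sigma v_\sigma^t, \qquad v_\sigma := (b_1(\sigma^{-1}x),\ldots,b_l(\sigma^{-1}x))^t,
\]
which is manifestly positive semi-definite as a nonnegative combination of rank-one PSD matrices.

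For the reverse inclusion, I would argue by contraposition: suppose $z \in V(I_\Pi)$ with $z \notin \Pi(\R^n)$, and produce a vector $v\in\R^l$ with $v^t \phi_z(B) v < 0$. By Proposition \ref{prop:equivSOS}, there exists an invariant sum of squares $f \in (\Sigma\R[\X]^2)^G$ such that $\phi_z(f) < 0$. Applying Proposition \ref{prop:matrixfacto} to $f$, there is a $G$-invariant SOS matrix polynomial $A = L^t L$ with $A \in (\R[\X]^G)^{l\times l}$ satisfying $f = \Tr(A \cdot B)$. Evaluating at $z$ yields
\[
\phi_z(f) \;=\; \Tr\bigl(\phi_z(A)\cdot\phi_z(B)\bigr) \;<\; 0.
\]
Now $\phi_z(A) = \phi_z(L)^t\phi_z(L) \psd$ is a concrete PSD real matrix. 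If $\phi_z(B)$ were also PSD, then the trace of a product of two PSD matrices would be nonnegative, contradicting the strict inequality above. Hence $\phi_z(B)$ cannot be PSD, which is the desired conclusion.

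The main step is essentially already done by the two preceding propositions; what remains is really just bookkeeping. The one conceptual point to be careful about is that Proposition \ref{prop:matrixfacto} is stated for a single invariant SOS $f$, so one must ensure the size of $A$ is compatible with $B$ (it is, since both arise from the same generating set $b_1,\ldots,b_l$). No further obstacle is anticipated, and the theorem follows immediately once both directions are combined.
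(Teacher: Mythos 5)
Your proof is correct and follows essentially the same route as the paper: the reverse inclusion is word-for-word the same contrapositive argument combining Proposition~\ref{prop:equivSOS} and Proposition~\ref{prop:matrixfacto} with the fact that the trace of a product of two PSD matrices is nonnegative. The only cosmetic difference is in the forward inclusion, where you exhibit $\phi_z(B)$ directly as the Gram-type average $\frac{1}{|G|}\sum_{\sigma\in G}v_\sigma v_\sigma^t$ of rank-one PSD matrices, whereas the paper checks $v^t\phi_z(B)v\geq 0$ by observing that $v^tBv=\mathcal{R}_G\bigl((\sum_i v_ib_i)^2\bigr)$ is an invariant sum of squares evaluated at a real point $x\in\Pi^{-1}(z)$; these are two equivalent ways of verifying PSD-ness.
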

\begin{proof}
Clearly, the condition that $B(z)$ is positive semi-definite is necessary. Indeed, for $z\in \Pi(\R^n)$  there exists  by definition $x\in \R^n$ such that $\Pi(x)=z$, and therefore $\phi_z(B)=B(x)$. Let $v\in \R^m$. Then
$v^TBv\in (\Sigma\R[\X]^2)^G$, so $v^T\phi_z(B)v=(v^TBv)(x)\geq 0$.
On the other hand, assume that $\phi_z(B)$ is positive semi-definite but that $\phi_z$ cannot be extended. By Proposition \ref{prop:equivSOS} we know that there exists an invariant sums of squares polynomial $f$ with $\phi_z(f)<0$. Furthermore, by Proposition \ref{prop:matrixfacto} we have
\[
f=\Tr(L(x)L(x)^t B(x)).
\]
However, since $\phi_z(L(x))\phi_z(L(x)^t)$ and $\phi_z(B(x))$ are positive semi-definite matrices, we have a contradiction. 
\end{proof}

\begin{remark}
This result is quite constructive: It suffices to find generators of $\R[\X]$ over $\R[\X]^G$. If $G$ is of order $l$, $\R[\X]$ is generated by
\[ X_1^{\alpha_1}\cdots X_n^{\alpha_n} \qquad \alpha\in \{0,\dots,l-1\}^n\]
over $\R[\X]^G$ (use \cite{atiyah2018introduction} Proposition 2.16, Proposition 5.1 and Corollary 5.2).
Furthermore, in the case of  finite reflection groups these generators can be found very directly and with combinatorial methods \cite{hubert2022algorithms,debus2023reflection}. Moreover, one can also use the  $G$-harmonic polynomials, which can be obtained as the partial derivatives of the determinant of the Jacobian of the Hilbert map (see \cite{helgason1984groups} chapter III 1.1, 3.6 and 3.7).
\end{remark}

\subsection{An elementary proof for Theorem \ref{procesischwarz}}
We now want to show an elementary proof for the concrete description given in Theorem \ref{procesischwarz}. To begin, we reformulate the statement in terms of homomorphisms.

\begin{proposition}\label{equivalenceprocesi}
The following are equivalent:
\begin{itemize}
    \item [(i)] $\Pi(\R^n) = \left\{ z\in V_\R(I_\Pi) ~\middle|~ \phi_z(M_\Pi) \text{ is positive semi-definite} \right\}$.
    \item [(ii)] Let $\phi: \R[\X]^G\to \R$ be a ring homomorphism. Then $\phi$ can be extended to a ring homomorphism $\tilde{\phi}: \R[\X]\to \R$ if and only if 
    \[\phi(\langle d f,d f\rangle)\geq 0 \text{ for all  }f\in \R[\X]^G.\]
\end{itemize}
\end{proposition}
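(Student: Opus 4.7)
The plan is to reduce the equivalence of (i) and (ii) to a purely linear-algebraic statement about the matrix $\phi(M_\Pi)$, using the dictionary between points of $V_\R(I_\Pi)$ and ring homomorphisms $\R[\X]^G \to \R$ that was set up in the Notations subsection.

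First I would translate both statements into the language of ring homomorphisms. Under the identification $V_\R(I_\Pi) \leftrightarrow \hom(\R[\X]^G,\R)$, a point $z \in V_\R(I_\Pi)$ lies in $\Pi(\R^n)$ if and only if $\phi_z$ extends to a ring homomorphism $\R[\X] \to \R$, by the proposition in the Notations subsection. Since $\phi_z(M_\Pi)$ is nothing but $M_\Pi(z)$ entry-wise, both (i) and (ii) characterize the same set of $\phi$'s provided we can show
\[
\phi(M_\Pi) \succeq 0 \;\Longleftrightarrow\; \phi(\langle df, df\rangle) \geq 0 \text{ for all } f \in \R[\X]^G.
\]
Establishing this equivalence is the entire content of the proposition.

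For the direction $(\Rightarrow)$, I would use that $\pi_1,\ldots,\pi_m$ generate $\R[\X]^G$, so any $f \in \R[\X]^G$ can be written as $f = F(\pi_1,\ldots,\pi_m)$ for some $F \in \R[z_1,\ldots,z_m]$. By the chain rule,
\[
df = \sum_{i=1}^m \frac{\partial F}{\partial z_i}(\pi_1,\ldots,\pi_m)\, d\pi_i,
\]
and therefore
\[
\langle df, df \rangle = v^T M_\Pi v, \qquad v_i := \frac{\partial F}{\partial z_i}(\pi_1,\ldots,\pi_m) \in \R[\X]^G.
\]
Applying $\phi$ entry-wise gives $\phi(\langle df, df\rangle) = \phi(v)^T \phi(M_\Pi) \phi(v) \geq 0$ since $\phi(M_\Pi)$ is positive semi-definite.

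For the direction $(\Leftarrow)$, I would specialize to linear invariants: for each $c = (c_1,\ldots,c_m) \in \R^m$, set $f_c := \sum_{i=1}^m c_i \pi_i \in \R[\X]^G$. Then $df_c = \sum_i c_i\, d\pi_i$, so $\langle df_c, df_c\rangle = c^T M_\Pi c$ as an invariant polynomial with constant coefficient vector $c$. Hence
\[
c^T \phi(M_\Pi) c = \phi(\langle df_c, df_c\rangle) \geq 0,
\]
and since $c \in \R^m$ is arbitrary, $\phi(M_\Pi)$ is positive semi-definite. The only point requiring care is the abuse of notation by which $M_\Pi$ is viewed as a matrix polynomial in the $\pi_i$'s: since its entries are invariant, this representation commutes with applying $\phi$, so the expression $\phi(M_\Pi)$ is unambiguous. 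No serious obstacle arises; the argument is essentially the chain rule together with the fact that constant vectors already generate enough test directions for positive semi-definiteness.
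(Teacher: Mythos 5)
Your proof is correct and takes essentially the same approach as the paper: the paper's proof is the terse remark that "since $\pi_1,\ldots,\pi_m$ generate the ring of invariants we have $\phi_z(M_\Pi)$ is positive semi-definite if and only if $\phi_z(\langle df, df\rangle)\geq 0$ for all $f\in\R[\X]^G$," and your chain-rule calculation for $(\Rightarrow)$ together with the linear-combination test vectors $f_c=\sum c_i\pi_i$ for $(\Leftarrow)$ is exactly the unpacking of that remark.
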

\begin{proof}
Let $z\in V_\R (I_\Pi)$. Note that $z\in\Pi(\R^n)$ if and only if $\phi_z$ can be extended to a ring homomorphism $\tilde{\phi}: \R[\X]\to \R$. Furthermore, since $\pi_1,\ldots,\pi_m$ generate the ring of invariants we have $\phi_z( M_\Pi)$ is positive semi-definite if and only if $\phi_z (\langle d f, d f\rangle ) \geq 0$ for all $f\in \R[\X]^G$.
\end{proof}

Note that $\phi_z(M_\Pi)$ being positive semi-definite in (i) is clearly necessary for $z$ being in the image of the real Hilbert map. Equivalently, $\phi (\langle d f, d f\rangle ) \geq 0$ for all $f\in \R[\X]^G$ is necessary in order to extend a homomorphism $\phi: \ \R[\X]^G\to \R$ to a homomorphism $\R[\X]\to \R$. So we will show that this is also sufficient, and our strategy consists in extending homomorphisms in steps, first to the ring of invariant polynomials for some subgroup $H$ of $G$. In order to do this we have to make sure that our positivity condition in (ii) extends also to $\R[\X]^H$ in this case. The following is in fact the core of the argument. 
\begin{proposition}\label{condition-extension}
Let $H$ be a subgroup of $G$ and let $\phi: \R[\X]^G\to \R$ be a homomorphism. Suppose \[\phi(\langle d f,d f\rangle)\geq 0 \text{ for all  }f\in \R[\X]^G,\] and $\phi$ can be extended to a homomorphism $\phi_H: \R[\X]^H\to \R$. Then
\[ \phi_H(\langle d f,d f\rangle) \geq 0 \text{ for all  }f\in \R[\X]^H.\]
\end{proposition}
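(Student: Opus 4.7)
The plan is to express $\langle df, df\rangle$ for $f \in \R[\X]^H$ in terms of inner products of differentials of $G$-invariant polynomials, via the characteristic polynomial of $f$ over $\R[\X]^G$, so as to invoke the positivity hypothesis on $\phi$.

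Concretely, for $f \in \R[\X]^H$, let $\sigma_1 = e, \sigma_2, \ldots, \sigma_k$ be left coset representatives of $H$ in $G$, set $f_i := f^{\sigma_i}$, and consider
\[
p(T) := \prod_{i=1}^k (T - f_i) = T^k + \sum_{l=1}^k c_l T^{k-l},
\]
whose coefficients $c_l = (-1)^l e_l(f_1, \ldots, f_k)$ lie in $\R[\X]^G$. Since $p(f) \equiv 0$ in $\R[\X]$, differentiating in $X_j$ yields $p'(f)\, \partial_{X_j} f = -\sum_l f^{k-l}\, \partial_{X_j} c_l$; squaring and summing over $j$ gives the identity
\[
p'(f)^2 \, \langle df, df\rangle \;=\; \sum_{l, l'=1}^k f^{2k-l-l'} \, \langle dc_l, dc_{l'}\rangle \qquad (\star)
\]
in $\R[\X]^H$, using that $p'(f) = \prod_{i \neq 1}(f - f_i)$ is $H$-invariant ($H$ fixes $f_1 = f$ and permutes $\{f_2, \ldots, f_k\}$).

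Applying $\phi_H$ to $(\star)$ and setting $v := (\phi_H(f)^{k-l})_{l=1}^k \in \R^k$ and $M := (\langle dc_l, dc_{l'}\rangle)_{l, l'=1}^k \in (\R[\X]^G)^{k \times k}$, the right-hand side evaluates to $v^T \phi(M) v$. The matrix $\phi(M)$ is positive semi-definite: for every $u \in \R^k$, one has $u^T M u = \langle dq, dq\rangle$ with $q := \sum_l u_l c_l \in \R[\X]^G$, so the hypothesis gives $\phi(u^T M u) \geq 0$. Hence
\[
\phi_H(p'(f))^2 \cdot \phi_H(\langle df, df\rangle) \;=\; v^T \phi(M) v \;\geq\; 0,
\]
which forces $\phi_H(\langle df, df\rangle) \geq 0$ whenever $\phi_H(p'(f)) \neq 0$.

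The principal obstacle is the degenerate case $\phi_H(p'(f)) = 0$, in which $(\star)$ imposes no constraint on $\phi_H(\langle df, df\rangle)$. I would handle it by a perturbation: set $f_\epsilon := f + \epsilon r$ for $r \in \R[\X]^H$ and $\epsilon \in \R$, so that the orbit becomes $\{f_i + \epsilon r^{\sigma_i}\}$ and
\[
p'_{f_\epsilon}(f_\epsilon) \;=\; \prod_{i \neq 1}\bigl((f - f_i) + \epsilon(r - r^{\sigma_i})\bigr),
\]
whose image under $\phi_H$ is a polynomial in $\epsilon$. The task is to exhibit some $r$ for which this polynomial is not identically zero in $\epsilon$: granting this, the non-degenerate conclusion gives $\phi_H(\langle df_\epsilon, df_\epsilon\rangle) \geq 0$ for all but finitely many small $\epsilon$, and since $\phi_H(\langle df_\epsilon, df_\epsilon\rangle)$ is itself polynomial in $\epsilon$, continuity passes the inequality to $\epsilon = 0$. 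The delicate point is thus the genericity statement on $r$, which I expect is accessible by a direct analysis of the factors above.
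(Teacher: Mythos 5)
Your argument in the non-degenerate case is correct and genuinely different from the paper's. The paper factors the Hilbert map as $\Pi = Q\circ P$ through the $H$-orbit space, picks a complex preimage $x$ with $P(x)=y$, reduces to trivial stabiliser by perturbing $y$, and then invokes the rank equality $\dim\im D_P^x=\dim\im D_\Pi^x$ (a cited geometric fact about quotient maps) to write $\phi_H(\langle df,df\rangle)=\phi(\langle dh,dh\rangle)$ for some $h\in\R[\X]^G$. Your route is purely algebraic: implicit differentiation of the characteristic polynomial $p(f)=0$ over $\R[\X]^G$, combined with positive semi-definiteness under $\phi$ of the Gram matrix $M=(\langle dc_l,dc_{l'}\rangle)_{l,l'}$, gives the inequality whenever $\phi_H(p'(f))\neq 0$. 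That part is clean and avoids the geometry entirely.

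There is, however, a genuine gap in the degenerate case $\phi_H(p'(f))=0$, and the perturbation of $f$ you propose cannot close it in general. Extend $\phi_H$ to a homomorphism $\psi\colon\R[\X]\to\C$ (possible by integrality of $\R[\X]$ over $\R[\X]^H$), corresponding to a point $x\in\C^n$; then $\phi_H(p'(f))=\prod_{i\neq 1}\psi(f-f_i)$. Suppose the coset $\sigma_iH\neq H$ meets the stabiliser $G_x$, say $\sigma_i=gh$ with $g\in G_x$, $h\in H$. Then for \emph{every} $r\in\R[\X]^H$ one has
\[
\psi(r^{\sigma_i})=r(\sigma_i^{-1}x)=r(h^{-1}g^{-1}x)=r(h^{-1}x)=r(x)=\psi(r),
\]
so the factor $\psi\bigl((f-f_i)+\epsilon(r-r_i)\bigr)$ vanishes identically in $\epsilon$ no matter which $r$ you choose, and your product stays identically zero. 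This happens exactly when $G_x\not\subseteq H$, which is a real possibility for the point $x$ realising $\phi_H$. The perturbation that is actually needed is of the homomorphism $\phi_H$ itself (equivalently of $x$), so as to reduce to trivial stabiliser and then conclude by continuity — this is what the paper does — and that in turn requires checking that the positivity hypothesis on $\phi$ is preserved in the limit. Perturbing $f$ inside $\R[\X]^H$ while holding $\phi_H$ fixed cannot substitute for this.
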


\begin{proof}
The ring of invariants $\R[\X]^H$ is a finitely generated $\R$-algebra, say $\R[\X]^H=\R[p_1,\dots,p_k]$ and write $\pi_i=q_i(p_1,\dots,p_k)$ for some $q_i\in\R[Y_1,\dots,Y_k]$. Consider
\begin{align*}
\Pi: \C^n &\to V_\C(I_\Pi), \ x\mapsto (\pi_1(x),\dots,\pi_m(x)),\\
P: \C^n &\to V_\C(I_P), \ x\mapsto (p_1(x),\dots,p_k(x)) \text{ and}\\
Q: V_\C(I_P) &\to V_\C(I_\Pi), \ x\mapsto (q_1(x),\dots,q_k(x))
\end{align*}
and furthermore, define the points 
\[z:=z_\phi:=(\phi(\pi_1),\dots,\phi(\pi_m))\in V_\R(I_\Pi) \text{ and } y:=y_{\phi_H}:=(\phi_H(p_1),\dots,\phi_H(p_k))\in V_\R(I_P)\]
corresponding to $\phi$ and $\phi_H$. Since $P$ is surjective, there is $x\in \C^n$ with $P(x)=y$, so $\Pi(x)=z$. Denote the corresponding total derivatives of $\Pi$, $P$ and $Q$ in $x$, respectively in $y$, by 
\[D_Q^y:\C^m \to \C^k, ~ 
D_P^x:\C^k \to \C^n ~ \text{and} ~
D_\Pi^x: \C^m \to \C^n.\]
We can assume furthermore that the stabilizer $G_x$ of $x$ in $G$ is trivial and therefore included in $H$ (otherwise we can perturbate $y$ and therefore $x$ and $z$ a little bit by continuity). So $\dim (\im(D_P^x))=\dim(\im(D_\Pi^x))$ (see \cite{abud1983geometry}) and therefore $\im(D_P^x)=\im(D_\Pi^x)$, since $\im(D_P^x)\supseteq\im(D_\Pi^x)$ is trivial. So for every $v\in \C^k$ there is some $u\in \C^m$ with $D_\Pi^x(u)=D_P^x(v)$ and so the identity
\[ v=D_Q^x(u)+(v-D_Q^x (u))\in \im D_Q^y + \ker D_P^x \]
shows that $\im D_Q^y + \ker D_P^x = \C^k$.\\
Let now $f\in \R[\X]^H$, say $f=g(p_1,\dots,p_k)$. Then there is $u \in \ker D_P^x$ and $v\in \im D_Q^y$ with $a:=(d g)(P(x))=u+v$, i.e. $D_P^x(u)=0$ and there is $w\in \C^m$ with $v=D_Q^y(w)$. Because $a$ and $y$ are real, we can assume that $w\in \R^m$. Then
\begin{align*}
\phi_H(\langle d f,d f\rangle) &= \langle (d f)(x),(d f)(x)\rangle \\
&= \langle D_P^x(a),D_P^x(a)\rangle\\
&= \langle D_P^x(u+v),D_P^x(u+v)\rangle \\
&= \langle D_P^x(u)+D_P^x(v),D_P^x(u)+D_P^x(v)\rangle \\
&= \langle D_P^x(D_Q^y(w)),D_P^x(D_Q^y(w))\rangle \\
&= \langle D_\Pi^x(w),D_\Pi^x(w)\rangle\\
&= \phi(\langle d h,d h\rangle)\geq 0,
\end{align*}
where $h:= w_1 \pi_1 + \cdots + w_m \pi_m \in \R[\X]^G$.
\end{proof}

Now we will collect some facts about invariants of groups of order two because we will extend our homomorphism first to the ring of invariant polynomials for some order two subgroup $H$ of $G$. This order two subgroup will correspond to a complex point and its complex conjugated point.

\begin{lemma}\label{lemmaorder2}
Let $H$ be a subgroup of $G$.
\begin{itemize}
\item[(a)] $\R[\X]^H=\R[\X]^G \oplus \ker \left({\mathcal{R}_G}\vert_{\R[\X]^H}\right)$.
\item[(b)] If $|G/H|=2$, then $\ker \left({\mathcal{R}_G}\vert_{\R[\X]^H}\right)^2 \subseteq \R[\X]^G$.
\end{itemize}
\end{lemma}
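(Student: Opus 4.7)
\textbf{Part (a).} The plan is to view $\mathcal{R}_G$ as giving a splitting of the inclusion $\R[\X]^G \hookrightarrow \R[\X]^H$. First I would note that $\mathcal{R}_G$ maps $\R[\X]^H$ into $\R[\X]^G$ (in fact it maps all of $\R[\X]$ into $\R[\X]^G$) and that $\mathcal{R}_G$ acts as the identity on $\R[\X]^G$, since for $f\in\R[\X]^G$ every term $f^\sigma$ equals $f$. Hence for any $f\in\R[\X]^H$ one has the decomposition
\[
f = \mathcal{R}_G(f) + \bigl(f-\mathcal{R}_G(f)\bigr),
\]
where the first summand lies in $\R[\X]^G$ and the second lies in $\ker(\mathcal{R}_G|_{\R[\X]^H})$ (it is $H$-invariant as a difference of $H$-invariants, and lies in the kernel because $\mathcal{R}_G$ is a projection). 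The sum is direct because any element of the intersection satisfies $f = \mathcal{R}_G(f) = 0$.

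\textbf{Part (b).} The key observation is that since $[G:H]=2$, the subgroup $H$ is automatically normal in $G$. Fixing any $\tau \in G\setminus H$, I would write $G = H \sqcup \tau H$ and compute, for $f\in\R[\X]^H$,
\[
\mathcal{R}_G(f) = \frac{1}{|G|}\Bigl(\sum_{h\in H} f^{h} + \sum_{h\in H} f^{\tau h}\Bigr) = \frac{1}{2}(f + f^\tau).
\]
The only slightly delicate point here is the identity $f^{\tau h} = f^\tau$ for $h\in H$ and $f\in\R[\X]^H$; this is verified by a direct computation using the definition $f^\sigma(x)=f(\sigma^{-1}x)$ together with $H$-invariance of $f$.

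From this formula the kernel is characterized exactly as
\[
\ker(\mathcal{R}_G|_{\R[\X]^H}) = \{\, f\in\R[\X]^H \mid f^\tau = -f \,\}.
\]
Given $f,g$ in this kernel, the product satisfies $(fg)^\tau = f^\tau g^\tau = (-f)(-g) = fg$, so $fg$ is $\tau$-invariant; being also $H$-invariant and since $G = H\cup \tau H$, the product $fg$ lies in $\R[\X]^G$. Taking $f=g$ (or interpreting the square of the kernel as the $\R$-span of products of two of its elements) yields the claim. I do not foresee any real obstacle; the only thing to be careful about is the convention for the $G$-action when verifying $f^{\tau h}=f^\tau$, and the use of normality of $H$ which comes for free from $[G:H]=2$.
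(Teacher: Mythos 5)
Your proof is correct and follows essentially the same approach as the paper: in (a), the decomposition $f=\mathcal{R}_G(f)+(f-\mathcal{R}_G(f))$ together with $\mathcal{R}_G$ acting as the identity on $\R[\X]^G$; in (b), the formula $\mathcal{R}_G(f)=\tfrac12(f+f^\tau)$ showing kernel elements are $\tau$-anti-invariant, then multiplicativity of the $G$-action. One small point in your favor: you prove the statement for a general product $fg$ of two kernel elements (reading $\ker(\ldots)^2$ as spanned by such products), whereas the paper's written proof only treats the square $r^2$ of a single element; your more careful reading is in fact what is invoked later in the proof of Lemma~\ref{lem:extendingtoZ2}, where $f_if_j$ with $i\neq j$ is needed, so your version matches the intended meaning.
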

\begin{proof}
\begin{itemize}
\item[(a)] Let $g\in\ker \mathcal{R}_G\cap \R[\X]^G$. Then $g=\mathcal{R}_G(g)=0$, so $\ker \mathcal{R}_G\cap \R[\X]^G=\{0\}$. Now let $f\in \R[\X]^H$. Then 
\[f=\mathcal{R}_G(f)+f-\mathcal{R}_G(f) \in \R[\X]^G \oplus \ker \left({\mathcal{R}_G}\vert_{\R[\X]^H}\right)\] 
and
\[\R[\X]^G \oplus \ker \left({\mathcal{R}_G}\vert_{\R[\X]^H}\right)\subseteq \R[\X]^H\] 
is clear.
\item[(b)] Denote $G/H=\{H,\sigma H\}$ and let $r\in \ker \left({\mathcal{R}_G}\vert_{\R[\X]^H}\right)^2$. Then
\[0=\mathcal{R}_G(r)= \frac{1}{2}(r +  \sigma \cdot r),\]
so $\sigma r=-r$ and therefore $\sigma r^2=r^2$, i.e. $r^2$ is $G$-invariant.
\end{itemize}
\end{proof}

From these elementary considerations we now can give a proof of Theorem \ref{procesischwarz}.

\begin{proof}[Proof of Theorem \ref{procesischwarz}]
We will show property (ii) in Proposition \ref{equivalenceprocesi}: "$\Rightarrow$" is clear.
\\"$\Leftarrow$" by contradiction: Let $\phi: \R[\X]^G\to \R$ be a ring homomorphism such that 
\[\phi(\langle d f,d f\rangle)\geq 0 \text{ for all  }f\in \R[\X]^G\]
and $\phi$ cannot be extended to $\R[\X]$, i.e. $z:=(\phi(\pi_1),\dots,\phi(\pi_m)) \notin \Pi(\R^n)$. There is $x\in \C^n$ such that $\Pi(x)=z$. Since $\Pi(\bar x)=z$, there is $\sigma \in G$ such that $\sigma \cdot x=\bar x$. Then $\sigma$ has even order. Now consider the subgroups $C_\sigma=\langle \sigma\rangle$ and $C_{\sigma^2}=\langle \sigma^2\rangle$ of $G$. Since 
\[f(x)=\sigma f(x)=f(\bar x)=\overline{f(x)}\]
for all $f\in \R[\X]^{C_\sigma}$, we can extend $\phi_z$ to a homomorphism $\tilde \phi:\R[\X]^{C_\sigma}\to \R$.
\\
By Proposition \ref{condition-extension}, $\tilde \phi(\langle d f,d f\rangle) \geq 0 \text{ for all  }f\in \R[\X]^{C_\sigma}$.
\\
Since $x\notin \R^n$, there is a linear $l\in \R[\X]$ with $l(x)=i$. Consider now
\[f= R_{C_{\sigma^2}}(l) - R_{C_{\sigma}}(l) \in \ker R_{C_\sigma}.\]
Note that $\langle d f, d f\rangle \in \R_{>0}$, because $f$ is linear. Since $C_{\sigma^2}$ acts trivially on $x$ and $\sigma x = \bar x$ we get that $f(x)=i$ and therefore
$\phi(f^2)=f^2(x)=-1<0$ with $f^2\in \R[\X]^{C_\sigma}$ by Lemma \ref{lemmaorder2}.
\\
Now
\[\tilde\phi(\langle d f^2, d f^2\rangle) = \tilde\phi(\langle 2f d f, 2f d f\rangle) = \tilde\phi(4f^2) \langle d f, d f\rangle < 0,\]
which is a contradiction to $\tilde\phi(\langle d g, d g\rangle)\geq 0$ for all $g\in \R[\X]^{C_\sigma}$.
\end{proof}

Theorem \ref{procesischwarz} can be extended to compact Lie groups by Luna's slice theorem, similar to what is done  in \cite{procesi1985inequalities}. We will end this section by briefly sketching   an alternative purely algebraic approach.
If the group $G$ is not finite, then $\R(\X)^G$ will not be algebraic over $\R(\X)$, i.e. there are transcendental elements $T_1,\dots,T_k\in \R(\X)$ over $\R(\X)^G$ such that $\R(\X)|\R(\X)^G[T_1,\dots,T_k]$ is algebraic and $\R(\X)^G[T_1,\dots,T_k]|\R(\X)^G$ is purely transcendental. We can assume without loss of generality, that $T_i=X_i$. Furthermore, $\R(\X)^G[X_1,\dots,X_k]=\R(\X)^H$, where $H=\text{stab}(X_1,\dots,X_k)$. Note that $H$ is finite since $\R(\X)|\R(\X)^H$ is algebraic. We can now extend $\phi$ to $\R[\X]^H$ by the following proposition.

\begin{proposition}\label{transcendentalextension}
Let $H$ be a subgroup of $G$ such that $\R(\X)^H$ is purely transcendental over $\R(\X)^G$, say $\R(\X)^H$=$\R(\X)^G[T_1,\dots,T_k]$ for some $T_1,\dots,T_k\in \R[\X]$ transcendental over $\R(\X)^G$. Furthermore, let $\phi: \R[\X]^G\to \R$ be a ring homomorphism. Then for any $t_1,\dots,t_k\in \R$, we get an extension $\phi_H: \ \R[\X]^H\to \R$ of $\phi$ by setting $\phi_H(T_i)=t_i$ and $\phi_H(f)=\phi(f)$ for all $f\in \R[\X]^G$.
\end{proposition}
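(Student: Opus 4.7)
The plan is to reduce the statement to the universal property of polynomial rings. As a first step, note that since $T_1,\ldots,T_k$ are algebraically independent over $\R(\X)^G$, they are \emph{a fortiori} algebraically independent over the subring $\R[\X]^G$. Hence the evaluation map $\R[\X]^G[Y_1,\ldots,Y_k] \to \R[\X]^H$ sending $Y_i \mapsto T_i$ is injective, so the subring $R := \R[\X]^G[T_1,\ldots,T_k]$ of $\R[\X]^H$ is canonically a polynomial ring in $k$ variables over $\R[\X]^G$. The universal property then yields a unique ring homomorphism $\psi \colon R \to \R$ with $\psi|_{\R[\X]^G} = \phi$ and $\psi(T_i) = t_i$.

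The main step is to show the equality $\R[\X]^H = R$, after which $\phi_H := \psi$ will be the desired extension. Both $R$ and $\R[\X]^H$ are integral domains sharing the fraction field $\R(\X)^H = \R(\X)^G(T_1,\ldots,T_k)$ by hypothesis, and both are integrally closed: the invariant rings $\R[\X]^G$ and $\R[\X]^H$ satisfy $\R[\X]^G = \R[\X]\cap \R(\X)^G$ and $\R[\X]^H = \R[\X]\cap \R(\X)^H$, so they are intersections of the UFD $\R[\X]$ with subfields of its fraction field, hence normal; and a polynomial ring over a normal ring is normal, making $R$ normal too. Since $R\subseteq \R[\X]^H$ and $R$ is integrally closed in the common fraction field, the equality will follow as soon as every element of $\R[\X]^H$ is shown to be integral over $R$.

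Establishing this integrality is the main obstacle. The finiteness of $H$ ensures that $\R[\X]$ is a finite, hence integral, extension of $\R[\X]^H$, but what is needed is integrality in the other direction, namely of $\R[\X]^H$ over the polynomial subring $R$. The plan here is to exploit that $\R[\X]^H$ is a finitely generated $\R[\X]^G$-algebra whose generators lie in the fraction field of $R$; combining the normality of $R$ with this finite generation, one then shows that each such generator satisfies a monic polynomial relation over $R$. Geometrically, the inclusion $R \hookrightarrow \R[\X]^H$ corresponds to a birational morphism between normal affine varieties with the same function field, and the purely transcendental structure of the extension forces the generic fiber of $\operatorname{Spec}\R[\X]^H \to \operatorname{Spec}\R[\X]^G$ to be precisely the affine space $\mathbb{A}^k$ parameterized by the $T_i$, leaving no room for a nontrivial integral part.

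Once the equality $\R[\X]^H = R$ is established, the homomorphism $\phi_H := \psi$ is, by construction, a well-defined ring homomorphism $\R[\X]^H \to \R$ extending $\phi$ with $\phi_H(T_i) = t_i$ for each $i$, which completes the proof.
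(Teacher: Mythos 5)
Your proposal is considerably more careful than the paper's own proof, which consists of the single line ``The homomorphism $\phi_H$ is well defined because $T_1,\dots,T_k$ are transcendental over $\R(\X)^G$.'' That remark only shows that the prescribed assignment extends uniquely to a ring homomorphism on the subring $R := \R[\X]^G[T_1,\dots,T_k]$; it tacitly assumes that $R$ is all of $\R[\X]^H$. You correctly recognize the equality $R = \R[\X]^H$ as the actual content of the statement.

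However, your argument for this equality has a genuine gap at its central step. The normality bookkeeping is fine: $\R[\X]^G$ and $\R[\X]^H$ are normal (each is $\R[\X]$ intersected with a subfield of $\R(\X)$), a polynomial ring over a normal domain is normal, so $R$ is normal, and $R$ and $\R[\X]^H$ share the fraction field $\R(\X)^H$. But from $R \subseteq \R[\X]^H$ with both rings normal and $\operatorname{Frac}(R) = \operatorname{Frac}(\R[\X]^H)$ one cannot conclude $R = \R[\X]^H$: for instance $\Z \subsetneq \Z[1/2]$ are both normal with fraction field $\Q$. What you actually need is that $\R[\X]^H$ is \emph{integral} over $R$ (equivalently, a finite $R$-module), and your proof never establishes this. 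The sentence ``one then shows that each such generator satisfies a monic polynomial relation over $R$'' merely restates the goal, and the geometric remark about the generic fiber of $\operatorname{Spec}\R[\X]^H \to \operatorname{Spec}\R[\X]^G$ being affine space does not yield integrality either --- a birational inclusion of normal affine domains with a common rational map to a base need not be an equality. This integrality step is not obvious in the stated generality and likely requires the additional structure present in the paper's intended application (the $T_i$ are coordinate functions $X_i$ and $H$ is their stabilizer); as written, the proof is incomplete.
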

\begin{proof}
The homomorphism $\phi_H$ is well defined because $T_1,\dots,T_k$ are transcendental over $\R(\X)^G$.
\end{proof}

Since we can choose $t_1,\dots,t_k \in \R$ arbitrarily, it should be possible to extend $\phi$ to $\R[\X]^H$ in such a way that the \textit{Procesi-Schwarz matrix} $M_{X_1,\dots,X_k,\Pi}$ corresponding to $H$ is positive semi-definite, although we were not able to prove this. This would reduce then to the finite case.

\begin{remark}
Alternatively to the view point of ring homomorphisms one could work also in the spectral setting, i.e. replacing $\R^n$ and $V_\R(I_\Pi)$ by the real spectrum of $\R[\X]$ and $\R[\X]^G$. In this setup one considers extensions of orders instead of extensions of homomorphisms. This approaches should be essentially equivalent and we decided not to take the spectral point of view here in order to keep our results more accessible.
\end{remark}

\section{Describing orbit spaces with few inequalities}
In the previous section, we obtained inequalities describing the orbit space as a basic closed semi-algebraic set. The aim of this section is to find descriptions of the orbit space involving fewer inequalities. To this end, we fix again a finite group $G$ and introduce some notation.

\begin{definition}
    We say that the orbit space $\Pi(\R^n)$ (or $\R^n/G$) is \emph{described by $f_1,\dots,f_k$}$\in\R[\X]^G$ if
    \[ \Pi(\R^n)= \left\{ z\in V(I_\Pi) ~\middle|~ \phi_z(f_1) \geq 0,\dots,\phi_z(f_k)\geq 0 \right\}. \]
    Furthermore, we will say that $\Pi(\R^n)$ is \emph{generically described by $f_1,\dots,f_k$} if
    \[ \Pi(\R^n)= \left\{ z\in V(I_\Pi) ~\middle|~ \phi_z(f_1) > 0,\dots,\phi_z(f_k)> 0 \right\}\cup T,\]
    where $\dim(T) < \dim(\Pi(\R^n))$.
\end{definition}

It was already observed that if $G$ has odd order, then one needs no inequalities to describe the orbit space $\R^n/G$, i.e. the Hilbert map is surjective. More generally, Bröcker (\cite{brocker1998symmetric}[Proposition 5.6.]) proved the following about the number of inequalities needed to generically describe the orbit space.

\begin{theorem}[Bröcker]\label{bröcker}
    Let $k$ be the maximal number for which $G$ contains an elementary abelian subgroup of order $2^k$. Then the orbit space $\Pi(\R^n)$ is generically described by some $f_1,\dots,f_k\in \R[\X]^G$.
\end{theorem}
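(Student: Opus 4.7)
The plan is to prove this upper bound in three stages, successively reducing from a general $G$ to a Sylow $2$-subgroup, then to an elementary abelian $2$-group of maximal rank, and finally building the $k$ inequalities explicitly in that case.

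First, I would reduce to the case where $G$ is a $2$-group. Let $S \le G$ be a Sylow $2$-subgroup, so that $[G:S]$ is odd. Then $\R[\X]^S$ is integral of odd degree over $\R[\X]^G$, and for every homomorphism $\phi \colon \R[\X]^G \to \R$ the set of extensions to $\R[\X]^S \otimes_\R \C$ has odd cardinality and is stable under complex conjugation, so at least one extension is real. Consequently $\Pi_G(\R^n)$ is the image of $\Pi_S(\R^n)$ under the finite, odd-degree map $V_\R(I_{\Pi^S}) \to V_\R(I_{\Pi^G})$, and a generic description of $\Pi_S(\R^n)$ by $k$ inequalities pushes forward to one of $\Pi_G(\R^n)$ by the same number (the exceptional set $T$ may need to be enlarged by the ramification locus of that map, which is of strictly smaller dimension). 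Since every elementary abelian $2$-subgroup of $G$ is conjugate into $S$, the invariant $k$ is unchanged.

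Second, for a $2$-group $S$, I would reduce to the maximal elementary abelian subgroup $H \le S$ of rank $k$. For a generic $z \in V_\R(I_{\Pi^S}) \setminus \Pi_S(\R^n)$ with complex preimage $\xi$, the element $\sigma_0 \in S$ defined by $\bar\xi = \sigma_0 \xi$ is an involution (by the same argument as in the proof of Theorem~\ref{procesischwarz}), and $z \in \Pi_S(\R^n)$ exactly when the conjugacy class of $\sigma_0$ is trivial. Using a Luna-type linearization around $\xi$, the local model of the action is governed by the elementary abelian subgroup generated by such obstruction involutions, whose rank is at most $k$. Thus the generic description problem for $S$ localizes to the corresponding problem for $H$.

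Third, for $H = (\Z/2)^k$ acting faithfully on $\R^n$, I would construct the $k$ inequalities directly. Decompose $\R^n = \bigoplus_{\chi \in \hat H} V_\chi$ into isotypic components and, exploiting faithfulness, choose $k$ characters $\chi_1, \ldots, \chi_k$ with $V_{\chi_j} \neq 0$ whose classes form a basis of $\hat H$. For each $j$ set $f_j := \sum_v \ell_v^2$ with $\ell_v$ running over a basis of linear forms on $V_{\chi_j}$; each $f_j$ lies in $\R[\X]^H$. Inspection of the syzygies in $\R[\X]^H$ (whose generators consist of squares of linear forms per character together with products $\ell_{\chi_{j_1}} \cdots \ell_{\chi_{j_r}}$ for which $\chi_{j_1} \cdots \chi_{j_r}$ is trivial in $\hat H$) then shows that the conditions $f_1 \geq 0, \ldots, f_k \geq 0$ force the remaining invariants to lie in the ranges compatible with a real preimage, outside a lower-dimensional set $T$ on which several $f_j$ vanish simultaneously.

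The main obstacle lies in the second step: making the identification of the obstruction subgroup as elementary abelian both global and compatible with a parameter $\xi$, while keeping the eventual exceptional set $T$ of dimension strictly less than $\dim \Pi(\R^n)$. This is exactly the point at which Bröcker's argument invokes real-spectrum and stability-index machinery, rendering it non-constructive; the rest of this section will sidestep this difficulty by carrying out the third step explicitly for abelian $G$ and for $k=1$.
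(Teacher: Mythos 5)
The paper does not actually prove Theorem~\ref{bröcker}: it is cited from Bröcker~\cite{brocker1998symmetric} (Proposition~5.6), and the text immediately following the statement observes that Bröcker's proof is non-constructive and that the section's goal is only to make it constructive in two special cases, namely $k=1$ (Theorem~\ref{case_k=1}) and abelian $G$ (Theorem~\ref{abelian}). So your proposal is not an alternative to a proof contained in the paper but an attempt at a full proof the authors deliberately do not give.

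As you note yourself, step~2 is a genuine gap: the passage from a $2$-group to a maximal elementary abelian subgroup via a Luna-type localization is exactly where Bröcker's real-spectrum and stability-index machinery is doing the work, and nothing in your outline replaces it. There is, however, a second unacknowledged gap in step~1. The odd-degree extension argument is correct and shows that $\Pi_G(\R^n)=Q(\Pi_S(\R^n))$ for the induced map $Q$. What is not automatic is that a description of $\Pi_S(\R^n)$ by $k$ inequalities $g_1,\dots,g_k\in\R[\X]^S$ pushes forward to $k$ $G$-invariant inequalities: a Sylow $2$-subgroup $S$ is in general not normal in $G$, the $g_i$ are not $G$-invariant, and the product-over-cosets symmetrization of Lemma~\ref{case2} only works for a \emph{single} inequality. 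Concretely, with three real extensions of a homomorphism $\phi$ one can have $g_1$-values $(1,-1,-1)$ and $g_2$-values $(-1,1,-1)$, so that both symmetrized products $\prod_{\sigma S\in G/S}\sigma g_1$ and $\prod_{\sigma S\in G/S}\sigma g_2$ evaluate nonnegatively at $\phi$ while no single extension satisfies both constraints; the enlargement of $T$ by a ramification locus does not repair this, as the failure occurs on a full-dimensional set. The paper's Lemma~\ref{directproduct}, which does handle more than one inequality, crucially assumes $H$ normal in $G$ so that Reynolds averaging is available, which a Sylow $2$-subgroup typically lacks. Your step~3 construction for $H\cong(\Z/2)^k$ (isotypic decomposition, a basis of characters with nonzero isotypic component, $f_j=\sum_v\ell_v^2$) is a sensible generalization of Lemma~\ref{lem:extendingtoZ2} and is in the spirit of Theorem~\ref{abelian}, but the concluding syzygy analysis and the control of the exceptional set $T$ are only asserted, not argued.
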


Note that the proof of Bröcker's Theorem is non-constructive. The goal of this section is to construct these few inequalities for the case where $k=1$ and for abelian groups. We answer first an open question raised by Bröcker \cite{brocker1998symmetric} and point out a small mistake in his Theorem: Bröcker writes that one needs in all examples he knows no $T$. He also states Theorem \ref{bröcker} with non-strict inequalities. The following example shows that sometimes some lower-dimension $T$ is needed and that the strict inequalities in Theorem \ref{bröcker} can not be replaced by non-strict inequalities.
\begin{example}\label{ex:lower-dimensional}
Consider $C_4=\left\langle (1,2,3,4) \right\rangle$ acting by permutation on $\R^4$. Denote by $\pi_1,\dots,\pi_k$ the fundamental invariants and by 
\[\Pi: ~ \C^4 \to \C^k\]
the Hilbert map as usual. Suppose $\Pi(\R^4)$ is described by one inequality $f\geq 0$, where $f\in \R[X_1,\dots,X_4]^{C_4}$. 
Since
\[ g(a,b,\bar a,\bar b)=(13)(24)g(a,b,\bar a,\bar b)=g(\bar a,\bar b,a,b)=\overline{g(a,b,\bar a,\bar b)}  \]
and
\[ g(a,\bar a,a,\bar a)=(1234)g(a,\bar a,a,\bar a)=g(\bar a,a,\bar a,a)=\overline{g(a,\bar a,a,\bar a)}  \]
for all $a,b\in \C$ and for all $g\in \R[X_1,\dots,X_4]^{C_4}$, we get that
\[\Pi(a,b,\bar a,\bar b)\in \R^k \quad \text{and} \quad \Pi(a,\bar a,a,\bar a)\in \R^k.\]
for all $a,b\in \C$. Therefore
\[f(a,b,\bar a,\bar b)\leq 0\]
for all $a,b,\in \C$, where the inequality is strict for $a$ or $b$ are not real. So we get that
\[f(a,b,a,b)=0\]
for all $a,b\in \R$. This means that the $2$-variate polynomial
\[g:=f(X_1,X_2,X_1,X_2)\]
vanishes on $\R^2$, so $g=0$ and thus $f(i,-i,i,-i)=g(i,-i)=0$, 
%and therefore $f$ is in the ideal generated by $X_1-X_3$ and $X_2-X_4$, but then
%$f(i,-i,i,-i)=0$ 
contradicting $(i,-i,i,-i)\notin \Pi(\R^4)$.
\\
In a similar way one can show that the strict inequalities in Theorem \ref{bröcker} can not be replaced by non-strict inequalities. Assume
\[\Pi(\R^n)= \left\{ z\in V(I_\Pi) ~\middle|~ \phi_z(f)\geq 0\right\}\cup T,\]
where $\dim(T) < \dim(\Pi(\R^n))$ for some $G$-invariant $f$. 
We get again
\[f(a,b,\bar a,\bar b)\leq 0\]
for all $a,b\in \C$. If
\[f(a,b,a,b)=0\]
for all $a,b\in\R$, then we get the same contradiction as before. So
\[f(a,b,a,b)<0\]
for some $a,b\in \R$. So
\[f<0 \text{ on some neighbourhood of $(a,b,a,b)$ in $\R^4$,}\]
which is a contradiction to $\dim(T)<\dim(\Pi(\R^n))$.
\end{example}

\subsection{Strategy for constructing few inequalities}
In this subsection, we introduce a strategy and some tools to prove a constructive version of Bröcker's Theorem. This is easy if $G$ is already an elementary abelian $2$-subgroup. Therefore, one might be tempted to try the following approach: 
\begin{enumerate}
    \item First we try to find an elementary abelian $2$-subgroup $H$ that corresponds to all complex conjugations that might appear. If such a subgroup exists, then we can extend every homomorphism $\phi: \R[\X]^G\to \R$ to a homomorphism $\tilde \phi: \R[\X]^H\to \R$ (see Lemma \ref{lem:broad=extendable}).
    \item Now one gets $H$-invariant polynomials $h_1,\dots,h_k$ describing the orbit space $\R^n/H$.
    \item Then one tries to symmetrize these polynomials. If one finds $G$-invariant polynomials $g_1,\dots,g_k$ describing the orbit space $\R^n/H$, then these will also generically describe the orbit space $\R^n/G$.
\end{enumerate}

The first step is in particular fulfilled if $G$ contains a so-called broad subgroup. In general, the first step fails, as shown in Example \ref{counterexample:broad}.

\begin{definition}
An elementary abelian $2$-subgroup $H$ of $G$ is called \emph{broad}, if every involution of $G$ is conjugated to an element of $H$.
\end{definition}

\begin{lemma}\label{lem:broad=extendable}
Let $H$ be a broad subgroup of $G$. Then every homomorphism
$\phi: \ \R[\X]^G\to \R$ of principal orbit type can be extended to a homomorphism $\tilde \phi:\ \R[\X]^H\to \R$.
\end{lemma}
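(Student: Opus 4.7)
The plan is to realize $\tilde\phi$ concretely as evaluation at a cleverly chosen complex point $y\in\C^n$, which reduces the lemma to the following geometric question: can one find $y$ with $\Pi(y)=z_\phi$ and with the additional property that $\tau y=\bar y$ for some $\tau\in H$? If this is achieved, then for every $f\in\R[\X]^H$ the chain $f(y)=f(\tau y)=f(\bar y)=\overline{f(y)}$ forces $f(y)\in\R$, so $\tilde\phi(f):=f(y)$ defines a ring homomorphism $\R[\X]^H\to\R$. Moreover, since $y$ will lie in the same $G$-orbit as an arbitrary preimage of $z_\phi$, evaluation on $G$-invariants recovers $\phi$ automatically, so $\tilde\phi$ indeed extends $\phi$.

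To produce such a $y$ I would imitate the construction used in the proof of Theorem~\ref{procesischwarz}. First, using surjectivity of the complex Hilbert map, pick $x\in\C^n$ with $\Pi(x)=z_\phi$. Because $z_\phi$ is real and $G$ acts through real matrices, $\Pi(\bar x)=\overline{\Pi(x)}=z_\phi$, so some $\sigma\in G$ satisfies $\sigma x=\bar x$. Applying $\sigma$ again gives $\sigma^2 x=\sigma\bar x=\overline{\sigma x}=x$, so $\sigma^2\in\Stab(x)$. This is the place where the principal-orbit-type hypothesis will be used: after a generic perturbation of $x$, as done in the proof of Proposition~\ref{condition-extension}, one may assume $\Stab(x)=\{1\}$, forcing $\sigma^2=1$. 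Hence $\sigma$ is an involution (or the identity, in which case $x\in\R^n$ and there is nothing to prove). Now broadness of $H$ furnishes $g\in G$ and $\tau\in H$ with $\sigma=g\tau g^{-1}$, and setting $y:=g^{-1}x$ yields both $\Pi(y)=\Pi(x)=z_\phi$ and
$\tau y=g^{-1}\sigma g\cdot g^{-1}x=g^{-1}\bar x=\overline{g^{-1}x}=\bar y$, exactly as needed.

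The main obstacle in this plan is the reduction $\sigma^2=1$: one must honestly convert the principal-orbit-type assumption into triviality of $\Stab(x)$ for a specific preimage $x$, making sure the perturbation argument does not move $z_\phi$ out of the set being described. Once this point is handled, everything else — the existence of $\sigma$ from complex conjugation, the conjugation into $H$ supplied by broadness, and the verification that the evaluation at $y$ lands in $\R$ and restricts correctly to $\phi$ — is formal bookkeeping.
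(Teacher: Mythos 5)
Your plan is essentially the paper's proof, translated from the language of homomorphisms to that of points: pick a complex preimage $x$ of $z_\phi$, find $\sigma$ with $\sigma x=\bar x$, use broadness to conjugate $\sigma$ into $H$, and take the translated preimage $y=g^{-1}x$; the chain $f(y)=f(\tau y)=\overline{f(y)}$ is exactly the paper's $\tilde\psi(f)=\tilde\psi(hf)=\overline{\tilde\psi}(f)$. So the structure is correct.

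However, the \emph{main obstacle} you identify — converting the principal-orbit-type assumption into $\Stab(x)=\{1\}$ via a generic perturbation, while keeping $z_\phi$ fixed — is not actually an obstacle; it is a misreading of the role of the hypothesis. The hypothesis that $\phi$ is of principal orbit type \emph{means} that any complex preimage of $z_\phi$ has minimal (for a faithful action: trivial) stabilizer. No perturbation is performed or needed, which is fortunate, because a perturbation would necessarily move $z_\phi$ and thereby change the homomorphism you are trying to extend — the statement is about a fixed $\phi$, and there is no obvious limiting argument that would transport the conclusion back. The contrast with Proposition~\ref{condition-extension} is instructive: there one is verifying a \emph{closed} inequality $\phi_H(\langle df,df\rangle)\geq 0$ at a single point, so perturbing to a nearby point of trivial stabilizer and invoking continuity is legitimate; here one is establishing \emph{existence of a real extension}, which is not a condition that passes to limits in any obvious way, so the hypothesis must be used directly, as the paper does in the one-line deduction ``$\sigma^2\psi=\psi$ and therefore $\sigma$ is an involution, since $\psi$ has principal orbit type.'' Once you replace the perturbation step with this direct use of the hypothesis, your argument is complete and coincides with the paper's.
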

\begin{proof}
Let $\varphi: \ \R[\X]^G\to \R$ be a homomorphism. There is an extension $\psi: \ \R[\X] \to \C$ of $\varphi$. Then $\overline{\psi}$ is also an extension of $\varphi$, i.e. there is $\sigma\in G$ such that
\[\sigma \psi = \overline{\psi}.\] 
Now $\sigma^2 \psi=\psi$ and therefore $\sigma$ is an involution, since $\psi$ has principal orbit type. Since $H$ is broad, there is $h\in H$ and $g\in G$ such that $hg=g\sigma$. Consider now the extension $\tilde \psi:=g\psi: \ \R[\X]\to \C$ of $\varphi$. Then
\[
h \tilde \psi = hg \psi = g\sigma \psi = g \overline{\psi} = \overline{\tilde \psi}
\]
and therefore 
\[\tilde \psi(f)=\tilde \psi(h f)=h \tilde \psi(f)=\overline{\tilde \psi}(f)\]
for all $f \in \R[\X]^H$. So $\tilde \varphi|_{\R[\X]^H}: ~ \R[\X]^H\to \R$ is a real extension of $\varphi$ to $\R[\X]^H$.
\end{proof}

There are at least three types of groups that contain broad subgroups. The first two results are given by \cite{guralnick2022commuting}.
\begin{theorem}[Guralnick and Robinson]
Let $G$ be quasi-simple. Then $G$ contains a broad subgroup.
\end{theorem}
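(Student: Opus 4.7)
The plan is to argue via the classification of finite simple groups (CFSG), following the strategy of Guralnick and Robinson. Let $Z = Z(G)$, so that $S := G/Z$ is a nonabelian finite simple group by the definition of quasi-simple. Involutions of $G$ split into two kinds: those lying in $Z$, which are automatically central (and so pairwise commuting), and those whose image in $S$ is a nontrivial involution. Setting $H_0 := \Omega_1(Z)$ for the $2$-torsion of $Z$, which is elementary abelian as a subgroup of the abelian group $Z$, the aim is to produce an elementary abelian $2$-subgroup $A$ of $G$ containing $H_0$ such that every non-central involution of $G$ is $G$-conjugate to an element of $A$; then $A$ itself will be broad.

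By Sylow's theorem every involution of $G$ is contained in some Sylow $2$-subgroup $P$, and all Sylow $2$-subgroups of $G$ are conjugate, so after fixing a single $P$ it suffices to find an elementary abelian $A \leq P$ with $H_0 \leq A$ meeting every $G$-conjugacy class of involutions of $P$. The substance of the proof is then a case analysis of $S$ using CFSG. For $S = A_n$ the involutions are classified by the number of transpositions in a cycle decomposition, and commuting representatives can be built from disjoint $2$-cycles sharing a common pivot index. For classical and other Lie-type groups in odd defining characteristic, involutions are semisimple and each lies in a maximal torus $T$; the $2$-torsion $\Omega_1(T)$ is elementary abelian, and the Weyl group action controls fusion so that one representative from each involution class can be placed inside $\Omega_1(T)$. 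In even defining characteristic, involutions are unipotent and one works inside the unipotent radical of a Borel subgroup, using root-subgroup structure to exhibit a sufficiently rich elementary abelian piece. The twenty-six sporadic groups are dispatched by direct inspection of their involution classes, e.g.\ from the ATLAS.

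The main obstacle is twofold. First, the argument must be uniformly executed across every family of the CFSG — especially the exceptional Lie-type groups and the small-rank classical groups where ad hoc analysis is required — and this is where the real work of Guralnick–Robinson lies. Second, the passage back from $S$ to $G$ is not automatic: an involution of $S$ may lift only to an element of order $4$ in $G$ when the Schur multiplier of $S$ contains nontrivial $2$-torsion, so one must replace such a class by a genuine involution representative in $G$ (which exists by starting with the $G$-involution and projecting, rather than lifting from $S$) and then verify that these replacements can still be packaged into a single commuting elementary abelian subgroup containing $H_0$. Keeping these representatives simultaneously commuting, class by class, is the technical heart of the argument.
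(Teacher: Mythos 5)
The paper does not actually prove this theorem: it is quoted as a result of Guralnick and Robinson and the paper simply cites their work \cite{guralnick2022commuting}, since a self-contained proof would require the classification of finite simple groups and is well outside the scope of the article. So there is no proof of the paper's own to compare yours against.

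As for your proposal on its own terms: the high-level strategy you describe — reduce to the simple quotient $S = G/Z(G)$, run a CFSG case analysis (alternating, Lie type in odd and even characteristic, sporadic), and handle the Schur-multiplier obstruction when passing back from $S$ to $G$ — is indeed the correct shape of the Guralnick–Robinson argument. But what you have written is a plan, not a proof. You explicitly flag the two places where all the actual mathematics lives ("this is where the real work of Guralnick–Robinson lies," "the technical heart of the argument") and leave both unresolved. In particular, you do not verify in any single family that the chosen representatives of distinct involution classes can be made to commute simultaneously and to contain $\Omega_1(Z(G))$, and you do not address the genuinely delicate cases (small-rank classical groups and exceptional Lie-type groups, and the cover-versus-quotient bookkeeping when the $2$-part of the Schur multiplier is nontrivial). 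Since the theorem is false for general finite groups (the paper's Example \ref{counterexample:broad} with $D_4$ shows broad subgroups need not exist), the case analysis is not a formality that can be waved at — it is the entire content. For the purposes of this paper the honest move is the one the authors make: state the theorem and cite \cite{guralnick2022commuting}, rather than gesture at a proof.
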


\begin{remark}\label{lem:sn_broad_subgroup}
As mentioned in \cite{guralnick2022commuting}, $S_n$ contains also a broad subgroup. Indeed
\[H:=\left\langle (1,2),(3,4),\dots,\left( 2\floor*{\frac{n}{2}}-1,2\floor*{\frac{n}{2}} \right) \right\rangle\]
is a broad subgroup of $S_n$.
\end{remark}

\begin{lemma}\label{lem:conjugated_or_commute}
    Assume all maximal elementary abelian 2-subgroups of $G$ are of order $2$. Then every involution of $G$ generates a broad subgroup of $G$.
\end{lemma}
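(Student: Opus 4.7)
My plan is to show that for any involution $\sigma\in G$, the subgroup $\langle\sigma\rangle=\{1,\sigma\}$ is broad. Since $\langle\sigma\rangle$ is visibly elementary abelian of order $2$, I only need to check that every involution of $G$ is $G$-conjugate to $\sigma$ (an involution can never be conjugate to the identity). In other words, the claim reduces to proving that all involutions of $G$ lie in a single $G$-conjugacy class.

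As a preliminary remark, under the hypothesis every elementary abelian $2$-subgroup of $G$ has order at most $2$: any one of larger order would be contained, by finiteness, in a maximal one of order at least $4$. Now fix a second involution $\tau\neq\sigma$. The first key step is that $\sigma$ and $\tau$ cannot commute, because otherwise $\{1,\sigma,\tau,\sigma\tau\}$ would be an elementary abelian $2$-subgroup of order $4$. Setting $r:=\sigma\tau$ and $n:=|r|$, one checks the dihedral relation $\sigma r\sigma^{-1}=r^{-1}$, so that $\langle\sigma,\tau\rangle\cong D_{2n}$. The cases $n=1$ and $n=2$ are ruled out by $\sigma\neq\tau$ and by non-commutativity, respectively, so $n\geq 3$.

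Next I would rule out that $n$ is even. If $n$ were even, the element $r^{n/2}$ is a central involution of $D_{2n}$, and it is distinct from $\sigma$ because $\sigma$ is a reflection while $r^{n/2}$ lies in the rotation subgroup. Then $\{1,\sigma,r^{n/2},\sigma r^{n/2}\}$ would be an elementary abelian $2$-subgroup of order $4$, contradicting the preliminary remark. Thus $n$ is odd.

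Finally, the routine dihedral computation $r^k\sigma r^{-k}=r^{2k}\sigma$ combined with $\gcd(2,n)=1$ shows that conjugating $\sigma$ by suitable powers of $r$ yields every reflection of $D_{2n}$; in particular $\tau=\sigma r$ is hit, exhibiting a conjugator inside $\langle\sigma,\tau\rangle\subseteq G$. The only step that requires a moment of thought is the even-order case, where the key is to recognize the central involution $r^{n/2}$ as a second commuting involution distinct from $\sigma$; once that is in place, the remainder is mechanical.
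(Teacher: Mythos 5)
Your proof is correct and follows essentially the same route as the paper: in both arguments one considers $r=\sigma\tau$, observes that when $|r|$ is even the element $r^{|r|/2}$ is an involution commuting with $\sigma$ (contradicting the hypothesis on elementary abelian $2$-subgroups), and when $|r|$ is odd one exhibits a power of $r$ conjugating $\sigma$ to $\tau$. Your write-up is merely more explicit about the dihedral structure and the degenerate small cases ($\sigma=\tau$, $\sigma,\tau$ commuting), which the paper's terser proof leaves implicit.
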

\begin{proof}
    This follows from the fact that two involutions are either conjugated or commute with a third involution: Let $\sigma, \tau \in G$ be involutions. Then
    \[(\sigma \tau)^k=1\]
    for some $k \in \N$. If $k$ is even, then $\sigma$ and $\tau$ commute with the involution $(\sigma \tau)^{\frac{k}{2}}$. If $k$ is odd then
    \[(\sigma \tau)^{\frac{k-1}{2}} \sigma \left ((\sigma \tau)^{\frac{k-1}{2}}\right)^{-1} = \tau.\]
\end{proof}

In general $G$ does not always contain a broad subgroup.

\begin{example}\label{counterexample:broad}
Consider the dihedral group 
\[D_4:=\left\langle \begin{pmatrix}
0 & 1\\
-1 & 0
\end{pmatrix}, \begin{pmatrix}
0 & 1\\
1 & 0
\end{pmatrix} \right\rangle\]
with corresponding invariant ring \[\R[X_1,X_2]^{D_4}=\R[\underbrace{X_1^2 + X_2^2}_{\pi_1}, \underbrace{X_1^4 + X_2^4}_{\pi_2}].\]
It is easy to check that $D_4$ has no broad subgroup. Furthermore, there is no elementary abelian $2$-subgroup $H$ such that every homomorphism of principal orbit type 
$\phi: \ \R[\X]^{D_4}\to \R$ can be extended to a homomorphism $\tilde \phi:\ \R[\X]^H\to \R$. To show this it suffices to show this for all maximal elementary abelian $2$-subgroups:
\begin{itemize}
    \item [case 1:] \[H:=\left\langle \begin{pmatrix}
0 & 1\\
1 & 0
\end{pmatrix}, \begin{pmatrix}
-1 & 0\\
0 & -1
\end{pmatrix} \right\rangle\]
Consider the homomorphism \[\phi: \ \R[\X]^{D_4}\to \R, g(\pi_1,\pi_2)\mapsto g(0,2).\]
Now $f:=X_1X_2\in\R[X_1,X_2]^H$ with $f^2\in\R[X_1,X_2]^G$ and
\[\phi(f^2)=\phi\left(\frac{\pi_1^2- \pi_2}{2}\right)=-1<0,\]
so $\phi$ cannot be extended to a homomorphism $\tilde \phi:\ \R[\X]^H\to \R$. Furthermore, it is easy to check that $\phi$ has principal orbit type, since it corresponds to the complex preimage $(1,i)$.
\item [case 2:] \[H:=\left\langle \begin{pmatrix}
1 & 0\\
0 & -1
\end{pmatrix}, \begin{pmatrix}
-1 & 0\\
0 & 1
\end{pmatrix} \right\rangle\]
Consider the homomorphism \[\phi: \ \R[\X]^{D_4}\to \R, g(\pi_1,\pi_2)\mapsto g(0,-8).\]
Now $f:=X_1^2-X_2^2\in\R[X_1,X_2]^H$ with $f^2\in\R[X_1,X_2]^G$ and
\[\phi(f^2)=\phi\left(2\pi_2- \pi_1^2\right)=-16<0,\]
so $\phi$ can not be extended to a homomorphism $\tilde \phi:\ \R[\X]^H\to \R$. Furthermore, it is easy to check, that $\phi$ has principal orbit type, since it corresponds to the complex preimage $(1+i,1-i)$.
\end{itemize}
\end{example}

\subsection{Bröcker's Theorem for $k=1$}
In this subsection, we will show how to describe the orbit space by one inequality if $k=1$ in Bröcker's Theorem \ref{bröcker}. To this end, we need to understand first the orbit space for groups of order two.

\begin{lemma}\label{lem:extendingtoZ2}
Let $G$ be a group of order two. The orbit space $\Pi(\R^n)$ is described by
\[f=\sum_{i=1}^n f_i^2\]
where $f_i:=X_i-\mathcal{R}_G(X_i)$.
\end{lemma}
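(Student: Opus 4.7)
The plan is to prove both inclusions by exploiting that $G$ has order two, say $G = \{1, \sigma\}$, so that $\mathcal{R}_G(X_i) = \tfrac{1}{2}(X_i + \sigma X_i)$ and hence $f_i = \tfrac{1}{2}(X_i - \sigma X_i)$. In particular, $f_i \in \ker \mathcal{R}_G$, which implies $\sigma f_i = -f_i$ and therefore $f_i^2$, and thus $f$, are $G$-invariant. So the statement makes sense.

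The inclusion $\Pi(\R^n) \subseteq \{z : \phi_z(f) \geq 0\}$ is immediate: for real $x$ and $z = \Pi(x)$ one has $\phi_z(f) = f(x) = \sum_i f_i(x)^2 \geq 0$.

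For the converse, suppose $z \in V(I_\Pi)$ with $\phi_z(f) \geq 0$. Since the complex Hilbert map $\Pi: \C^n \to V_\C(I_\Pi)$ is surjective, there exists $x \in \C^n$ with $\Pi(x) = z$. Because $z$ is real, $\Pi(\bar x) = \bar z = z$, so $\bar x$ and $x$ lie in the same $G$-orbit: either $\bar x = x$, in which case $x \in \R^n$ and we are done, or $\bar x = \sigma x$. In this second case, I would compute directly
\[
f_i(x) = \tfrac{1}{2}\bigl(x_i - (\sigma x)_i\bigr) = \tfrac{1}{2}(x_i - \bar x_i) = i\,\mathrm{Im}(x_i),
\]
so $f(x) = -\sum_{i=1}^n \mathrm{Im}(x_i)^2 \leq 0$. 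Combined with $f(x) = \phi_z(f) \geq 0$, this forces $\mathrm{Im}(x_i) = 0$ for every $i$, i.e.\ $x \in \R^n$, so $z \in \Pi(\R^n)$.

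The only step requiring any thought is the identification $f_i(x) = i\,\mathrm{Im}(x_i)$ when $\bar x = \sigma x$, but this is essentially immediate from the definition of $f_i$ once one recognises that the hypothesis $\bar x = \sigma x$ replaces $\sigma$ by complex conjugation on the coordinates of $x$. Everything else is formal, so this is the main (and essentially only) content.
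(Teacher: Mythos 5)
Your proof is correct, and it takes a genuinely different (more geometric) route than the paper's. The paper argues purely in terms of ring homomorphisms: it extends $\varphi\colon\R[\X]^G\to\R$ to $\tilde\varphi\colon\R[\X]\to\C$, observes $\R[\X]=\R[\X]^G[f_1,\dots,f_n]$, and then does a case split on the values $\varphi(f_i^2)$, using Lemma~\ref{lemmaorder2}(b) (that $f_if_j\in\R[\X]^G$) to force all $\tilde\varphi(f_i)$ into $\R$ once one of them is real and nonzero. You instead work directly with a complex preimage $x\in\C^n$ of $z$: from $\Pi(\bar x)=\Pi(x)$ you deduce $\bar x=\sigma x$ (or $x$ real outright), identify $\sigma$ with coordinatewise conjugation at $x$, compute $f_i(x)=i\,\mathrm{Im}(x_i)$, and conclude $\phi_z(f)=-\sum_i\mathrm{Im}(x_i)^2\leq 0$ with equality iff $x$ is real. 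Your argument is more transparent and avoids the sign casework entirely; the paper's is phrased so as to reuse Lemma~\ref{lemmaorder2} and to stay in the homomorphism language used throughout Section~2. The facts you invoke (surjectivity of the complex Hilbert map and orbit-separation, i.e.\ $\Pi(x)=\Pi(y)\iff Gx=Gy$) are standard and are used elsewhere in the paper, so this is a perfectly acceptable substitute.
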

\begin{proof}
It suffices to show that a homomorphism $\varphi: ~ \R[\X]^G\to \R$ can be extended to a homomorphism $\tilde \varphi: ~ \R[\X]\to \R$ if and only if $\varphi(f)\geq 0$.
First note that $f$ is $G$-invariant by Lemma \ref{lemmaorder2}, so $\varphi(f)\in \R$ and one implication is clear because $f$ is a sum of squares. For the other implication, let $\varphi: ~ \R[\X]^G\to \R$ be a homomorphism such that $\varphi(f)\geq 0$. The homomorphism $\varphi$ can be extended to a homomorphism $\tilde \varphi: ~ \R[\X]\to \C$. Note that
\[\R[\X]=\R[\X]^G[f_1,\dots,f_n]\]
\underline{Case 1:} $\varphi(f_i^2)=0$ for all $i\in \{1,\dots,n\}$. Then $\tilde\varphi(f_i)=0$ for all $i\in \{1,\dots,n\}$ and therefore $\im \tilde \varphi\subseteq \R$.
\\
\underline{Case 2:} $\varphi(f_i^2)>0$ for some $i\in \{1,\dots,n\}$. Then $\tilde\varphi(f_i)\in \R\setminus \{0\}$. Now $\tilde \varphi(f_j)\in \R$ for all $j\in \{1,\dots,n\}$ since $\tilde \varphi(f_i) \tilde \varphi(f_j)=\varphi(f_if_j)\in \R$ by Lemma \ref{lemmaorder2}. So again $\im \tilde \varphi\subseteq \R$.
\end{proof}

\begin{theorem}\label{case_k=1}
Let $G$ be a group such that all maximal elementary abelian 2-subgroups of $G$ are of order $2$. Then $\Pi(\R^n)$ is generically described by some $g\in \R[\X]^G$. \\
More precisely, if $|G|=q2^l$ with $q$ odd and 
\[H_1\subseteq H_2\subseteq \dots \subseteq H_l\subseteq G\]
are nested subgroups with $|H_i|=2^i$, then one can choose
\[g= \prod_{\sigma H_l\in G/H_l}\left( \sigma R_{H_l}\left(\sum_{i=1}^n\big(X_i-R_{H_1}\left(X_i\right) \big)^2\right) \right).\] 
\end{theorem}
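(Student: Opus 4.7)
The plan is to verify both inclusions in
\[
\Pi(\R^n) = \bigl\{z \in V(I_\Pi) : \phi_z(g) > 0\bigr\} \cup T
\]
for some $T$ with $\dim T < \dim \Pi(\R^n)$. The easy inclusion $\Pi(\R^n) \supseteq \{\phi_z(g) > 0\} \cup T$ follows by taking $T := \{z \in \Pi(\R^n) : \phi_z(g) = 0\}$: the polynomial $h := \sum_{i=1}^n (X_i - R_{H_1}(X_i))^2$ is a sum of squares, $R_{H_l}$ and $G$-translation preserve non-negativity on $\R^n$, and $g \not\equiv 0$, so $T$ is a proper algebraic subset of $\Pi(\R^n)$, hence lower-dimensional.

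For the reverse inclusion I must show $\phi_z(g) \leq 0$ for every $z \in V_\R(I_\Pi) \setminus \Pi(\R^n)$. By Theorem \ref{finitegroupsbasicclosed} this set is open in $V_\R(I_\Pi)$, principal orbit type is dense in it, and $g$ is continuous; so I reduce to $z$ of principal orbit type. Choosing $x \in \Pi^{-1}(z) \subseteq \C^n \setminus \R^n$ with trivial stabilizer, some $\sigma \in G$ satisfies $\sigma x = \bar x$; then $\sigma^2 x = x$ forces $\sigma^2 = 1$. By Lemma \ref{lem:conjugated_or_commute}, the hypothesis implies $H_1 = \langle s \rangle$ is broad, so $\sigma$ is $G$-conjugate to $s$. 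Replacing $x$ by $y = \tau x$ for a suitable $\tau \in G$ arranges $s y = \bar y$, with $g(x) = g(y)$ by invariance.

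The heart of the argument is the sign analysis of $g(y)$. I work, without loss of generality (by averaging an inner product), in coordinates making $G$ orthogonal. The hypothesis forces the $2$-Sylow $H_l$ to be cyclic or generalized quaternion, so $s$ is central in $H_l$; combined with orthogonality, $h = \tfrac14 \|\cdot - s(\cdot)\|^2$ is $H_l$-invariant, giving $R_{H_l}(h) = h$ and
\[
g(y) = \prod_{[\sigma'] \in G/H_l} \tfrac14 \bigl\|y - \sigma' s \sigma'^{-1} y\bigr\|^2 = \prod_{s'} \bigl(\tfrac14 \|y - s'y\|^2\bigr)^c,
\]
where $s'$ ranges over involutions of $G$ (all conjugate to $s$ by broadness) and $c := |C_G(s)|/|H_l|$ is odd because $H_l$ is the $2$-Sylow of $C_G(s)$. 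The sign is read off by pairing involutions under $s' \mapsto s s' s$: for $s' = s$ the factor equals $-\|\mathrm{Im}(y)\|^2 < 0$; for $s' \neq s$ the hypothesis forbids $s$ and $s'$ to commute (else $\langle s, s' \rangle$ is an elementary abelian $2$-group of order $4$), so $s s' s \neq s'$ and the pairing is fixed-point-free away from $s$, while a short manipulation using $sy = \bar y$ and orthogonality of $s$ shows that the factors at $s'$ and at $s s' s$ are complex conjugates of each other, so each pair contributes a non-negative real to the product. Since $c$ is odd, $g(y) \leq 0$ follows, which finishes the key inclusion.

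The main obstacle I anticipate is the simultaneous bookkeeping of three facts in this sign analysis: that $c$ is odd, that the involutive pairing $s' \mapsto s s' s$ has $s$ as its only fixed point, and that paired factors are genuinely complex conjugate. All three rest essentially on the structural consequence of the hypothesis (maximal elementary abelian $2$-subgroups of order $2$), namely that the $2$-Sylow is cyclic or generalized quaternion with $s$ as its unique, central involution.
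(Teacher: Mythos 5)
Your proposal is correct in outline but takes a genuinely different route from the paper. The paper argues entirely at the level of homomorphism extensions: having reduced to principal orbit type and extended $\varphi$ to $\R[\X]^{H_1}$ via broadness of $H_1$, it climbs the chain $H_1 \subset \cdots \subset H_l$ inductively (Lemma~\ref{claim1}), at each step replacing $f_i$ by $f_i + \sigma f_i$ and arguing about which real extension to keep; it then handles the final product over $G/H_l$ (Lemma~\ref{case2}) by pairing conjugate non-real restrictions $\sigma\tilde\varphi|_{\R[\X]^{H_l}}$ and using that $|G/H_l|$ is odd. You instead evaluate $g$ directly at a complex preimage $y$ with $sy = \bar y$ and do an explicit sign count. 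Your approach buys a very transparent picture of where the hypothesis enters: ``maximal elementary abelian $2$-subgroups of order $2$'' is used precisely to get (i) a cyclic or generalized quaternion $2$-Sylow $H_l$ with a unique, central involution $s$, which makes $h$ already $H_l$-invariant, and (ii) $s$ as the only fixed point of $s' \mapsto ss's$; while the oddness of $c = |C_G(s)|/|H_l|$ (Sylow in $C_G(s)$) replaces the paper's use of $|G/H_l|$ odd. The pairing of complex-conjugate factors plays the same structural role as the paper's pairing of conjugate homomorphism restrictions, but in explicit coordinates.

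One caveat you should flag: ``WLOG $G$ orthogonal'' changes the polynomial. The formula for $g$ in the theorem is written in the given coordinates $X_1,\dots,X_n$; an orthogonalizing change of basis produces a different $G$-invariant polynomial, so your argument as stated only proves the theorem for that new $g$, not for the $g$ in arbitrary coordinates. This is a repairable blemish, not a fatal one: dropping the orthogonality assumption, the identity $R_{H_l}(h)=h$ fails, but $\sigma R_{H_l}(h)(y)=\frac{1}{4|H_l|}\sum_{\tau\in H_l}\lVert \tau^{-1}\sigma^{-1}(y-\sigma s\sigma^{-1}y)\rVert^2$ still holds by centrality of $s$, the ``fixed'' factors become the $c$ cosets of $H_l$ in $C_G(s)$ (each still strictly negative, since for $\sigma\in C_G(s)$ the vector $\tau^{-1}\sigma^{-1}(y-sy)$ is purely imaginary), and the paired cosets $\sigma H_l\leftrightarrow s\sigma H_l$ still give complex-conjugate factors because $R_{H_l}(h)$ has real coefficients and $\sigma^{-1}$ is real. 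So the same sign count gives $g(y)\leq 0$. Incorporating this makes your proof match the theorem's exact statement, and it is a legitimately different and more explicit argument than the one in the paper.
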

\begin{proof}
It suffices to show that there is some $g\in \R[\X]^G$, such that for every homomorphism $\varphi:~ \R[\X]^G\to \R$ of principal orbit type, that $\varphi(g)\geq 0$ if and only if $\varphi$ can extended to a homomorphism $\R[\X]\to \R$.
\\
By Lemma \ref{lem:conjugated_or_commute}, $H_1$ is a broad subgroup of $G$. So by Lemma \ref{lem:broad=extendable} every homomorphism of principal orbit type
$\varphi: \ \R[\X]^G\to \R$ can be extended to a homomorphism $\varphi_H:\ \R[\X]^{H_1}\to \R$.
\\
Since $H_1$ is of order $2$, there is a $H_1$-invariant sum of squares 
\[f_1=\sum_{i=1}^n\big(X_i-R_{H_1}\left(X_i\right) \big)^2\] 
such that $\varphi_H$ can be extended to a homomorphism $\tilde \varphi: \ \R[\X]\to \R$ if and only if $\varphi_H(f_1)\geq 0$ by Lemma \ref{lem:extendingtoZ2}.
The argument now follows from the following two lemmas.

\begin{lemma}\label{claim1}
For every $1\leq i \leq l$ there exists a sum of squares $f_i\in \R[\X]^{H_i}$ such that
$\varphi$ can be extended to a homomorphism $\tilde \varphi: \ \R[\X]\to \R$ if and only if $\varphi_H(f_i)\geq 0$. 
\end{lemma}
\begin{proof} We go by  induction on $i$. The base case $i=1$ is done above. For the induction step from $i$ to $i+1$ we suppose the claim holds for some $1\leq i\leq l-1$, i.e. there is $f_i\in \R[\X]^{H_i}$ such that $\varphi$ can be extended if and only if $\varphi_H(f_i)\geq 0$. Either $f_i$ is already $H_{i+1}$-invariant and we are done or
\[\R(\X)^{H_i}=\R(\X)^{H_{i+1}}(f_i)=\R(\X)^{H_{i+1}}(\sigma f_i).\]
Consider the sum of squares $f_{i+1}:=f_i+\sigma f_i\in \R[\X]^H_{i+1}$, where $H_{i+1}/H_{i}$ is generated by $\sigma\in G$. If $\varphi$ can be extended to a homomorphism $\tilde \varphi: \ \R[\X]\to \R$, then $\varphi_H(f_{i+1})\geq 0$, since $\tilde \varphi$ is real and $f_{i+1}$ is a sum of squares.

If $\varphi_H(f_{i+1})\geq 0$, then $\varphi_H(f_i)\geq 0$ or $\varphi_H(\sigma f_i)\geq 0$, because $\sigma f_i$ is $H_i$-invariant. In the first case we are done by the induction hypothesis and in the second case we consider the homomorphism $\sigma ^{-1} \varphi_H$ instead of $\varphi_H$: The homomorphism $\sigma ^{-1} \varphi_H$ is
a real extension of $\varphi$ and $\sigma^{-1}\varphi_H(f_i)\geq 0$, so we can use the induction hypothesis.\end{proof}
By Lemma \ref{claim1} there is a $H_l$-invariant sum of squares $f_l$ such that $\varphi$ can be extended to a homomorphism $\tilde \varphi: \ \R[\X]\to \R$ if and only if $\varphi_H(f_l)\geq 0$.

\begin{lemma}\label{case2} $\varphi$ can be extended to a homomorphism $\tilde \varphi: \ \R[\X]\to \R$ if and only if 
\[\varphi(\prod_{\sigma H_l \in G/H_l} \sigma f_l)\geq 0.\]
\end{lemma}
\begin{proof}
The if case is clear, since $f:=\prod_{\sigma H_l \in G/H_l} \sigma f_l$ is a sum of squares. For the other direction assume that $\varphi(f)\geq 0$ and consider the complex extension $\tilde \varphi: \ \R[\X]\to \C$ of $\varphi$. Now
\[0\leq \varphi(f)=\prod_{\sigma H_l \in G/H_l} \tilde \varphi(\sigma f_l)=\prod_{\sigma H_l \in G/H_l}  \sigma\tilde\varphi|_{\R[\X]^H}(f_l).\]
If the image of $\sigma\tilde\varphi|_{\R[\X]^H}$ is not real for some $\sigma$, then also its complex conjugated appears in the product, so their product is positive. Since $|G/H_l|$ is odd, there has to be $\sigma H_l\in G/H_l$, such that $\sigma \tilde \varphi(f_l)\geq 0$ and $\sigma \tilde \varphi|_{\R[\X]^H}$ is a real homomorphism. Now we apply Claim 1 to $\sigma \varphi|_H$ instead of $\varphi|_H$.
\end{proof}
The poof now follows directly from Lemmas \ref{claim1} and \ref{case2}.
\end{proof}

Note that Theorem \ref{case_k=1} is constructive. We give some examples.
\begin{example}
\begin{enumerate}
\item Consider $S_3$ with the standard action on $\R[X_1,X_2,X_3]$. Following the proof of Theorem \ref{case_k=1} we consider first some order two subgroup of $S_3$, say $S_2=\langle (12) \rangle$. We obtain using Lemma \ref{lem:extendingtoZ2} the $S_2$-invariant polynomial 
\[f= (X_1-R_{S_2}(X_1))^2+(X_2-R_{S_2}(X_2))^2+(X_3-R_{S_2}(X_3))^2 =(X_1-X_2)^2 \]
in Claim 1. This polynomial is symmetrized in Claim 2 and we get, that the orbit space $\R^3/S_3$ is generically described by the discriminant 
\[g=\prod_{i=0}^2 (123)^i f = \prod_{1\leq i<j\leq 3}(X_i-X_j)^2.\]
\item Consider the Quaternion group
\[Q_8=\left\langle 
\begin{pmatrix}
0 &-1 & 0 & 0\\
1 & 0 & 0 & 0\\
0 & 0 & 0 &-1\\
0 & 0 & 1 & 0
\end{pmatrix},
\begin{pmatrix}
0 & 0 & -1 & 0\\
0 & 0 &  0 & 1\\
1 & 0 &  0 & 0\\
0 &-1 &  0 & 0
\end{pmatrix},
\begin{pmatrix}
0 & 0 &  0 & -1\\
0 & 0 & -1 &  0\\
0 & 1 &  0 &  0\\
1 & 0 &  0 &  0
\end{pmatrix}\right\rangle\]
with the canonical action on $\R[X_1,X_2,X_3,X_4]$. Consider again first the unique order two subgroup $H=\langle -I_4 \rangle$. In Theorem \ref{case_k=1} Claim 1 we obtain the $H$-invariant polynomial
\[f=X_1^2+X_2^2+X_3^2+X_4^2\]
using Lemma \ref{lem:extendingtoZ2}. Since $f$ is already $Q_8$-invariant, we don't need to apply the second step. So the orbit space $\R^4/Q_8$ is already generically described by $f$.
\item Consider the Dihedral group 
\[D_5=\left\langle 
\begin{pmatrix}
0 & 0 & 0 & 0 & 1\\
1 & 0 & 0 & 0 & 0\\
0 & 1 & 0 & 0 & 0\\
0 & 0 & 1 & 0 & 0\\
0 & 0 & 0 & 1 & 0
\end{pmatrix},
\begin{pmatrix}
0 & 0 & 0 & 0 & 1\\
0 & 0 & 0 & 1 & 0\\
0 & 0 & 1 & 0 & 0\\
0 & 1 & 0 & 0 & 0\\
1 & 0 & 0 & 0 & 0
\end{pmatrix}\right\rangle\]
with the canonical action on $\R[X_1,X_2,X_3,X_4,X_5]$. Theorem \ref{case_k=1} yields that the orbit space $\R^5/D_5$ is generically described by
\begin{align*}
\big((X_1-X_5)^2+&(X_2-X_4)^2\big)
\big((X_2-X_1)^2+(X_3-X_5)^2\big)
\big((X_3-X_2)^2+(X_4-X_1)^2\big) \\
\big(&(X_4-X_3)^2+(X_5-X_2)^2\big)
\big((X_5-X_4)^2+(X_1-X_3)^2\big).    
\end{align*}
\end{enumerate}
\end{example}

\subsection{Bröcker's Theorem for abelian groups}

In this subsection, we want to give a constructive version of Bröcker's Theorem \ref{bröcker} for abelian groups. To this end, we need to understand first what happens if we have a normal subgroup of $G$.

\begin{lemma}\label{directproduct}
Let $H$ be a normal subgroup of $G$. Assume that:
\begin{enumerate}
    \item There are $g_1,\dots,g_k\in \R[\X]^G$ such that every homomorphism $\varphi_G: ~ \R[\X]^G\to \R$ can be extended to a homomorphism $\varphi_H: ~ \R[\X]^H\to \R$, if and only if $\varphi_G(g_1)\geq 0,\dots,\varphi_G(g_k)\geq 0$.
    \item There is $h\in \R[\X]^H$ such that every homomorphism $\varphi_H: ~ \R[\X]^H\to \R$ can be extended to a homomorphism $\varphi: ~ \R[\X]\to \R$, if and only if $\varphi_H(h)\geq 0$.
\end{enumerate}
Then a homomorphism $\varphi_G: ~ \R[\X]^G\to \R$ can be extended to a homomorphism $\varphi: ~ \R[\X]\to \R$, if and only if $\varphi_G(\mathcal{R}_G(h))\geq 0,\varphi_G(g_1)\geq 0,\dots,\varphi_G(g_k)\geq 0$.
\end{lemma}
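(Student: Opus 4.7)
My plan is to treat both directions using the Reynolds averaging identity $\mathcal{R}_G(h)=\tfrac{1}{|G|}\sum_{\sigma\in G}h^{\sigma}$. The two underlying facts I will use are (a) every $\sigma\in G$ fixes $\R[\X]^G$ pointwise, and (b) because $H$ is normal in $G$, every $\sigma\in G$ preserves $\R[\X]^H$ setwise. Together these imply that if $\psi$ is any real homomorphism on $\R[\X]^H$ or $\R[\X]$ extending $\varphi_G$, then so is $\psi\circ\sigma$ for every $\sigma\in G$.

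Necessity is almost immediate. Suppose $\varphi:\R[\X]\to\R$ extends $\varphi_G$. Its restriction to $\R[\X]^H$ is a real extension of $\varphi_G$, so the ``only if'' of~(1) gives $\varphi_G(g_i)\geq 0$. For the Reynolds term, each $\varphi\circ\sigma$ is a real homomorphism $\R[\X]\to\R$ extending $\varphi_G$, hence its restriction to $\R[\X]^H$ is a real extension of $\varphi_G$ that extends further to $\R[\X]$; the ``only if'' of~(2) then forces $(\varphi\circ\sigma)(h)\geq 0$ for every $\sigma$, and averaging gives $\varphi_G(\mathcal{R}_G(h))\geq 0$. Sufficiency is the interesting direction. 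Assume the stated inequalities hold. Hypothesis~(1) yields some real extension $\varphi_H:\R[\X]^H\to\R$ of $\varphi_G$, and applying $\varphi_H$ to the Reynolds identity gives
\[
\varphi_G(\mathcal{R}_G(h))\;=\;\varphi_H(\mathcal{R}_G(h))\;=\;\tfrac{1}{|G|}\sum_{\sigma\in G}(\varphi_H\circ\sigma)(h),
\]
where each $\varphi_H\circ\sigma$ is again a real extension of $\varphi_G$ to $\R[\X]^H$. Since the average is non-negative, at least one summand $(\varphi_H\circ\sigma)(h)$ is non-negative; for such $\sigma$, hypothesis~(2) extends $\varphi_H\circ\sigma$ to a real homomorphism $\R[\X]\to\R$, which is the sought extension of $\varphi_G$.

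The argument has no single isolated difficulty. Its real content is the observation that applying the Reynolds operator turns the question ``does \emph{some} extension of $\varphi_G$ to $\R[\X]^H$ satisfy $(\cdot)(h)\geq 0$?'' into the easier question ``is a $G$-average of such values non-negative?''. The only point requiring genuine care is fact~(b), that is, the use of normality of $H$ to ensure both that $\varphi_H\circ\sigma$ makes sense as a homomorphism on $\R[\X]^H$ and that each summand $h^{\sigma}$ of $\mathcal{R}_G(h)$ lies in $\R[\X]^H$ so that $\varphi_H$ may be applied term by term.
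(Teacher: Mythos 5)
Your proof is correct and follows essentially the same route as the paper's: both directions hinge on the observation that each $h^\sigma$ lies in $\R[\X]^H$ by normality, so one can evaluate the Reynolds sum under an extension $\varphi_H$ and deduce that some conjugate $\varphi_H\circ\sigma$ (the paper writes $\sigma\varphi_H$) is a non-negative, hence extendable, real extension of $\varphi_G$. The only differences from the paper are notational.
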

\begin{proof}
Let $\varphi_G: ~ \R[\X]^G\to \R$ be a homomorphism. 
\\
"$\Rightarrow$:" If $\varphi_G$ can be extended to a homomorphism $\varphi: ~ \R[\X]\to \R$, then $\varphi_G(g_1)\geq 0,\dots,\varphi_G(g_k)\geq 0$ by (1). For every $\sigma\in G$ we have that $\sigma \varphi|_{\R[\X]^H}$ is an extension of $\varphi_G$, so $\varphi(\sigma h)=\sigma \varphi|_{\R[\X]^H}(h)\geq 0$ by (2) and therefore $\varphi_G(\mathcal{R}_G(h)))\geq 0$.
\\
"$\Leftarrow$:" Suppose $\varphi_G(\mathcal{R}_G(h))\geq 0,\varphi_G(g_1)\geq 0,\dots,\varphi_G(g_k)\geq 0$. Then $\varphi_G$ can be extended to a homomorphism $\varphi_H: ~ \R[\X]^H\to \R$ by (1). Now $\sigma h$ is $H$-invariant for all $\sigma \in G$, since $H$ is a normal subgroup of $G$. Now
\[\sum_{\sigma \in G} \varphi_H(\sigma h)=|G|\varphi_H(\mathcal{R}_G(h)) =|G|\varphi_G(\mathcal{R}_G(h)) \geq 0\]
and since all the terms are real, $\varphi_H(\sigma_0 h)\geq 0$ for some $\sigma_0 \in G$. 
Then $\sigma_0 \varphi_H(h)= \varphi_H(\sigma_0 h)\geq 0$ and $\sigma_0 \varphi_H\in \hom(\R[\X]^H, \R)$, so we can extend $\sigma_0 \varphi_H$ to a homomorphism $\varphi: ~ \R[\X]\to \R$ by (2).
\end{proof}

Note that Theorem \ref{case_k=1} includes in particular cyclic groups of even order. The following example shows that Lemma \ref{lem:extendingtoZ2} together with Lemma \ref{directproduct} give a polynomial of lower degree describing generically the orbit space in this case. Moreover, this example generalizes Example \ref{ex:lower-dimensional} above.

\begin{example}\label{cyclic_group}
Let $G=C_m$ be a cyclic group. If $m$ is odd, the Hilbert map is surjective, so we need no inequality. If $m$ is even, write $m=2^k q$, where $q$ is odd. Furthermore, consider the cyclic subgroups
\[\{\id \} = C_1 \subset C_2\subset C_4 \subset \dots \subset C_{2^k} \subset C_m\]
and the $C_m$-invariant polynomials
\[f_i:= R_{C_m}\left( \sum_{j=1}^n \left(R_{C_{2^{i-1}}}(X_j) - R_{C_{2^i}}(X_j) \right)^2\right)\]
Then from Lemma \ref{lem:extendingtoZ2} and Lemma \ref{directproduct} we obtain that the orbit space $\R^n/C_m$ is generically described by $f_1$. Furthermore, $\R^n/C_m$ is described by $f_1,\dots,f_k$: 
For every non-real preimage $x\in \C^n\setminus \R^n$, there is $\sigma$ of order $2^i$ with $\sigma x =\bar x$ (replace $\sigma$ by $\sigma ^r$ for some odd $r$). Now it is easy to check that $f_i(x)<0$:
\begin{align*}
f_i(x)&=R_{C_m}\left( \sum_{j=1}^n \left(R_{C_{2^{i-1}}}(x_j) - R_{C_{2^i}}(x_j) \right)^2\right)\\
&=R_{C_m}\left( \sum_{j=1}^n \left(\frac{1}{|C_{2^{i-1}}|}\sum_{l=1}^{2^{i-1}} \sigma^{2l}(x_j) - \frac{1}{|C_{2^{i}}|}\sum_{l=1}^{2^{i}} \sigma^{l}(x_j) \right)^2\right)\\
&=\frac{1}{|C_{2^{i}}|}R_{C_m}\left( \sum_{j=1}^n \left(2\sum_{l=1}^{2^{i-1}} \sigma^{2l}(x_j) - \sum_{l=1}^{2^{i-1}} \sigma^{2l}(x_j)+\sigma^{2l+1}(x_j) \right)^2\right)\\
&=\frac{1}{2^{i}}R_{C_m}\left( \sum_{j=1}^n \left(\sum_{l=1}^{2^{i-1}} \sigma^{2l}(x_j) - \sigma^{2l+1}(x_j) \right)^2\right)\\
&=\frac{1}{2^{i}m} \sum_{\tau\in C_m} \tau\left(  \sum_{j=1}^n \left(\sum_{l=1}^{2^{i-1}} \sigma^{2l}(x_j-\overline{x_j}) \right)^2\right)\\
&=\frac{1}{2m} \sum_{\tau\in C_m} \left(  \sum_{j=1}^n \left(\tau x_j - \overline{\tau x_j} \right)^2\right)<0.
\end{align*}
In general, $\R^n/C_m$ can not be described by less than $k$ polynomials, which can be shown in a similar way as in Example \ref{ex:lower-dimensional}.
\end{example}

We can now prove Bröcker's Theorem for abelian groups.
\begin{theorem}\label{abelian}
Let $G$ be abelian and choose $k\in \N_0$ such that the maximal abelian $2$-subgroups of $G$ are of order $2^k$. Then $\R^n/G$ is generically described by $f_1,\dots,f_k \in \R[\X]^G$.
\end{theorem}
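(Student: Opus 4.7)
The plan is to argue by induction on $k$, the rank of the maximal elementary abelian $2$-subgroup of $G$. The base case $k=0$ corresponds to $G$ having odd order; here the Hilbert map is surjective (for instance, using that any invariant polynomial is, up to a positive constant, the average over an odd-size orbit, combined with a Descartes-type argument in the spirit of Proposition \ref{prop:equivSOS}), so no inequalities are required.

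For the inductive step, the structure theorem for finite abelian groups lets us write $G = C_{2^{a_1}} \times \cdots \times C_{2^{a_k}} \times G_{\mathrm{odd}}$. Taking $H = C_{2^{a_1}} \times \cdots \times C_{2^{a_{k-1}}} \times G_{\mathrm{odd}}$ yields a normal subgroup of $G$ with cyclic $2$-power quotient $G/H \cong C_{2^{a_k}}$, and with $H$ abelian of rank $k-1$. By the inductive hypothesis there exist $h_1, \dots, h_{k-1} \in \R[\X]^H$ generically describing $\R^n/H$. For the cyclic quotient $G/H$, a relative version of the construction in Example \ref{cyclic_group} (applied to the induced $C_{2^{a_k}}$-action on $\mathrm{Spec}(\R[\X]^H)$) should produce a single polynomial $g_k \in \R[\X]^G$ that generically controls whether a principal-orbit-type homomorphism $\varphi_G \colon \R[\X]^G \to \R$ extends to some $\varphi_H \colon \R[\X]^H \to \R$. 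Combining these via a generic, multi-polynomial version of Lemma \ref{directproduct}, the $k$ polynomials $\mathcal{R}_G(h_1), \dots, \mathcal{R}_G(h_{k-1}), g_k \in \R[\X]^G$ should generically describe $\R^n/G$.

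The chief obstacle is extending Lemma \ref{directproduct} so that it accepts multiple polynomials $h_j$ in its condition (2). The original proof, with a single $h$, exploits the non-negativity of $\sum_\sigma \varphi_H(\sigma h)$ to produce some coset representative $\sigma_0 \in G/H$ with $\varphi_H(\sigma_0 h) \geq 0$. With several $h_j$ one needs a \emph{single} $\sigma_0$ making $\varphi_H(\sigma_0 h_j) \geq 0$ simultaneously for all $j$, and this is not automatic. I plan to address this by working with strict inequalities and the principal-orbit-type assumption: for generic $\varphi_G$ and $G/H$ cyclic of $2$-power order, the sets $\{\sigma \in G/H : \varphi_H(\sigma h_j) > 0\}$ should form unions of cosets of subgroups of $G/H$, and one expects their intersection to be generically non-empty. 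Alternatively, one could iterate Lemma \ref{directproduct} along the full chain $G_{\mathrm{odd}} \subset C_{2^{a_1}} \times G_{\mathrm{odd}} \subset \cdots \subset G$, adding one cyclic $2$-power extension at a time and tracking how the previously constructed polynomials transform under Reynolds averaging.
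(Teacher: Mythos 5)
Your main plan puts the chain in the wrong order for Lemma~\ref{directproduct}, and you have correctly identified the resulting gap — but the patch you suggest does not obviously work, whereas simply reversing the direction of iteration eliminates the problem. Concretely, you take $H = C_{2^{a_1}}\times\cdots\times C_{2^{a_{k-1}}}\times G_{\mathrm{odd}}$ and propose to combine one polynomial $g_k$ for the cyclic step $\R[\X]^G\to\R[\X]^H$ with $k-1$ polynomials $h_1,\dots,h_{k-1}$ describing $\R^n/H$. But Lemma~\ref{directproduct} is asymmetric: hypothesis~(1), governing extensions $\R[\X]^G\to\R[\X]^H$, tolerates several polynomials, while hypothesis~(2), governing $\R[\X]^H\to\R[\X]$, requires a \emph{single} $h$, because the proof uses $\sum_{\sigma}\varphi_H(\sigma h)\geq 0$ to produce one representative $\sigma_0$ with $\varphi_H(\sigma_0 h)\geq 0$. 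With several $h_j$, each averaged inequality produces its own $\sigma_j$, and nothing forces these to coincide — the sets $\{\sigma : \varphi_H(\sigma h_j)\geq 0\}$ are non-empty but may be pairwise disjoint. Your heuristic that these sets should be unions of cosets of subgroups of $G/H$ has no visible justification (the $h_j$ are only $H$-invariant, so there is no residual symmetry to enforce it), and restricting to principal orbit type or strict inequalities does not repair the combinatorial mismatch.

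The paper's proof keeps $G$ fixed at the top and iterates on the target ring instead: writing $G\cong H_1\times\cdots\times H_k\times H_{k+1}\times\cdots\times H_m$ with $H_1,\dots,H_k$ of even order, set $G_i := H_{i+1}\times\cdots\times H_m$ and prove by induction that there are $g_1,\dots,g_i\in\R[\X]^G$ controlling extensions $\R[\X]^G\to\R[\X]^{G_i}$. At each step, the accumulated $g_1,\dots,g_i$ feed hypothesis~(1) of a relative version of Lemma~\ref{directproduct}, and the \emph{single} polynomial from Example~\ref{cyclic_group} for the cyclic step $\R[\X]^{G_i}\to\R[\X]^{G_{i+1}}$ feeds hypothesis~(2); the lemma then delivers $g_{i+1}:=\mathcal{R}_G(h)$. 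The final passage from $\R[\X]^{G_k}$ to $\R[\X]$ is automatic since $G_k$ has odd order. Your closing sentence gestures at an iteration along the full chain, but the remark about ``tracking how the previously constructed polynomials transform under Reynolds averaging'' suggests you still picture growing $G$ from below, which again places multiple polynomials into hypothesis~(2). In the correct picture the previously constructed $g_j$ are already $G$-invariant and unchanged; only the one new polynomial from the current cyclic step gets $G$-averaged.
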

\begin{proof}
Since $G$ is abelian, it is isomorphic to the direct product of cyclic groups, say $G\cong H_1\times \dots \times H_k \times H_{k+1}\times \dots \times H_m$, where $H_1,\dots,H_k$ are of even order and $H_{k+1},\dots,H_m$ are of odd order.
\\
By applying iteratively Remark \ref{cyclic_group} and Lemma \ref{directproduct} we find $g_1,\dots,g_i\in \R[\X]^G$ ($1\leq i \leq k$) such that every homomorphism $\varphi_G: ~ \R[\X]^G\to \R$ of principal orbit type can be extended to a homomorphism $\varphi_i: ~ \R[\X]^{H_{i+1}\times \dots \times H_m}\to \R$, if and only if $\varphi_G(g_1)\geq 0,\dots,\varphi_G(g_i)\geq 0$. Since $H_{k+1}\times \dots \times H_m$ is odd, we can extend every homomorphism $\varphi_k: ~ \R[\X]^{H_{k+1}\times \dots \times H_m}\to \R$ to a homomorphism $\varphi: ~ \R[\X]\to \R$.
\end{proof}

The following example shows how to use the proof of Theorem \ref{abelian} to construct inequalities describing generically the orbit space of abelian groups.

\begin{example}\label{ex:abelian}
Consider the abelian group
\[G:=\left\langle \begin{pmatrix}
0 & 0 & 0 & 1\\
1 & 0 & 0 & 0\\
0 & 1 & 0 & 0\\
0 & 0 & 1 & 0
\end{pmatrix}\begin{pmatrix}
-1& 0 & 0 & 0\\
0 &-1 & 0 & 0\\
0 & 0 &-1 & 0\\
0 & 0 & 0 &-1
\end{pmatrix} \right\rangle\cong C_4\times C_2\]
and the subgroup
$H:=\left\langle \begin{pmatrix}
-1& 0 & 0 & 0\\
0 &-1 & 0 & 0\\
0 & 0 &-1 & 0\\
0 & 0 & 0 &-1
\end{pmatrix} \right\rangle\cong C_2$
of $H$. The orbit space $\R^4/H$ is generically described by the polynomial 
\[h=X_1^2+X_2^2+X_3^2+X_4^2\]
by Example \ref{cyclic_group}. Furthermore, we can also apply Example \ref{cyclic_group} to the ring extension 
\[\R[\X]^G\subseteq \R[\X]^H\] and get that a homomorphism $\phi: ~\R[\X]^G\to \R$ of principal orbit type can be extended to a homomorphism 
$\tilde \phi: ~ \R[\X]^H\to \R$ if and only if $\phi(g)\geq 0$, where
\[g=(X_1^2-X_3^2)^2+(X_2^2-X_4^2)^2+(X_1X_2-X_3X_4)^2+(X_1X_4-X_2X_3)^2.\] Now by Lemma \ref{directproduct} the orbit space $\R^4/G$ is generically described by the polynomials $g$ and $\\mathcal{R}_G(h)=h$.
\\
By considering first 
\[H_2=\left\langle \begin{pmatrix}
0 & 0 & 0 & 1\\
1 & 0 & 0 & 0\\
0 & 1 & 0 & 0\\
0 & 0 & 1 & 0
\end{pmatrix} \right\rangle\]
instead of $H$, one obtains that $\R^4/G$ is also generically described by
\[h_2=(X_1-X_3)^2+(X_2-X_4)^2 \text{ and } g_2=(\sum_{i=1}^4 X_i)^2+(\sum_{i=1}^4 X_i^3)^2+(X_1X_2^2+X_2X_3^2+X_3X_4^2+X_4X_1^2)^2.\]
\end{example}

\section{Conclusion and open questions}
We conclude our article with some open questions and points for further inquiry. We were able to show Bröcker's result constructively for some classes of groups. However, our techniques developed here do not seem to directly apply to other interesting groups. For example, note that the symmetric $S_n$ contains the broad subgroup
\[H_n:=\left\langle (1,2),(3,4),\dots,\left( 2\floor*{\frac{n}{2}}-1,2\floor*{\frac{n}{2}} \right) \right\rangle\] 
for all $n$ by Remark \ref{lem:sn_broad_subgroup}. Still, our techniques fail to give a generic description of the orbit space $\R^n/S_n$ with $\lfloor \frac{n}{2} \rfloor$ inequalities for $n\geq 4$. By Theorem \ref{abelian} we get that $\R^n/H_n$ is described by the $\floor*{\frac{n}{2}}$ polynomials
\[f_1:=(X_1-X_2)^2,f_2:=(X_3-X_4)^2,\dots,f_{\floor*{\frac{n}{2}}}:=(X_{2\floor*{\frac{n}{2}}-1}-X_{2\floor*{\frac{n}{2}}})^2\]
and since $H_n$ is broad we can extend every homomorphism in $\hom(\R[\X]^{S_n},\R)$ to a homomorphism in $\hom(\R[\X]^{H_n},\R)$ by Lemma \ref{lem:broad=extendable}. A natural approach might be to symmetrize the polynomials $f_1,\dots,f_{\floor*{\frac{n}{2}}}$  in such a way that the set they describe remains the same and we could in this way obtain a description of $\R^n/H_n$ in terms of $S_n$-invariant polynomials. Then, these polynomials would also describe $\R^n/S_n$. However, we currently are not able to produce such a symmetrization.  Scheiderer mentioned to us a way to generically describe $\R^4/S_4$ with two inequalities if $S_4$ acts by permutation. We include this example with his permission. 

\begin{example}
The invariant ring $\R[\X]^{S_4}$ is generated by the elementary symmetric polynomials $e_1,e_2,e_3,e_4$. Denote the corresponding Hilbert-map by $E$. Then $z\in E(\R^4)$, if and only if the univariate polynomial
\[f=T^4-z_1T^3+z_2T^2-z_3T+z_4\]
has only real roots. Furthermore, it is well known that the nature of the roots of $f_z$ is given by the eigenvalues of its Hermite matrix $H_f$. The number of real roots of $f_z$ is equal to the signature of $H_f$, which is generically equal to the number of sign changes in the series $p_1,p_2,p_3,p_4$ of the leading principal minors of $H_f$. So the signature of $H_f$ is non-negative. Now, since $p_1=4$ is constant, $H_f$ is positive definite, if and only if $p_2 p_4>0$ and $p_3>0$. I.e., the orbit space $\R^4/S_4$ is generically described by $p_2p_4$ and $p_3$.
\end{example}
This example  can be generalized to get $n-2$ inequalities that generically describe the orbit space $\R^n/S_n$, but it seems hopeful that a similar approach might give the expected number of $\lfloor \frac{n}{2}\rfloor$ inequalities. This setup seems particularly interesting, since the orbit space $\R^n/S_n$ can be viewed as the space of hyperbolic univariate polynomials. 

A further question consists in the relationship of our sums of squares approach in section 2 and the general description. In \cite[Theorem 4.9]{procesi1985inequalities}  a description of the orbit space using covariants is given and the description we obtain by Theorem \ref{finitegroupsbasicclosed} naturally contains this description, - as covariants are a natural subset of the ring extension we used to represent the sums of squares. However, our description additionally gives redundant inequalities and it would be interesting to explore if there is a simple way to directly remove these redundant inequalities to arrive at a more compact description. 

Finally, the description of the orbit space in terms of the matrix polynomial $M_\Pi$ gives a natural finitely generated quadratic module in the invariant ring:
\[
\left\{\Tr\left(A\cdot M_\Pi\right) ~\middle|~ A \text{ is a sums of invariant squares matrix polynomial}\right\}\subseteq \R[\X]^G
\]
This quadratic module naturally is contained in the invariant sums of squares polynomial, but might be substantially smaller. It is unclear how big the difference is, and this is a question which might be interesting, in particular, from the viewpoint of applications.

\section*{Acknowledgments}
This work has been supported by the Tromsø Research Foundation (grant agreement 17matteCR). The authors would like to express their gratitude to Igor Klep,  Claus Scheiderer, and Markus Schweighofer for fruitful discussions.

\printbibliography
\end{document}